\documentclass[a4paper,12pt]{amsart}

\usepackage{hyperref}

\usepackage{amsfonts}
\usepackage{amssymb}
\usepackage{amsxtra}
\usepackage{amstext}
\usepackage[english]{babel}
\usepackage{enumerate}

\usepackage{xcolor,ulem}

\setlength{\textwidth}{6.25in}
\setlength{\textheight}{9.5in}
\setlength{\topmargin}{0.0in}\setlength{\oddsidemargin}{0in}
\setlength{\evensidemargin}{0in}

\newtheorem{theorem}{Theorem}[section]

\newtheorem{lemma}[theorem]{Lemma}
\newtheorem{corollary}[theorem]{Corollary}
\newtheorem{proposition}[theorem]{Proposition}
\newtheorem{example}[theorem]{Example}
\newtheorem{remark}[theorem]{Remark}

\newtheorem{hypothesis}[theorem]{Hypothesis}

\def\bit{\begin{itemize}}
\def\eit{\end{itemize}}
\reversemarginpar   
\def\bc{\begin{center}}
\def\ec{\end{center}}
\def\bthm{\begin{theorem}}
\def\ethm{\end{theorem}}
\def\bcor{\begin{corollary}}
\def\ecor{\end{corollary}}
\def\bprop{\begin{proposition}}
\def\eprop{\end{proposition}}
\def\blem{\begin{lemma}}
\def\elem{\end{lemma}}

\def\brem{\begin{remark}}
\def\erem{\end{remark}}

\def\bdes{\begin{description}}
\def\edes{\end{description}}

\def\beq{\begin{equation}}
\def\eeq{\end{equation}}
\def\ben{\begin{enumerate}}
\def\een{\end{enumerate}}
\def\beqar{\begin{eqnarray}}
\def\eeqar{\end{eqnarray}}
\def\beqarr{\begin{eqnarray*}}
\def\eeqarr{\end{eqnarray*}}





\def\P{{\mathsf P}} 
\def\E{{\mathsf E}} 


\def\a{\alpha}

\def\rmd{{\rm d}}

\def\part{\partial}

\def\d#1dt{\frac{d#1}{dt}}    


\begin{document}

\title{Strongly vertex-reinforced jump process on a complete graph}
\author{Olivier Raimond} 
\address{(O. Raimond) Mod\'elisation al\'eatoire de l'Universit\'e Paris Nanterre (MODAL'X), 92000 Nanterre, France}
\email{olivier.raimond@parisnanterre.fr}
\author{Tuan-Minh Nguyen}
\address{(T.M. Nguyen) School of Mathematics, Monash University, 3800 Victoria, Australia}
\email{tuanminh.nguyen@monash.edu}
\date{\today}
\keywords{Vertex-reinforced jump processes; nonlinear reinforcement; random walks with memory; stochastic approximation; non convergence to unstable equilibria.}
\subjclass[2010]{60J55, 60J75}

\begin{abstract}
The aim of our work is to study vertex-reinforced jump processes with super-linear weight function $w(t)=t^{\alpha}$, for some $\alpha>1.$ On any complete graph $G=(V,E)$, we prove that there is one vertex $v\in V$ such that the total time spent at $v$ almost surely tends to infinity while the total time spent at the remaining vertices is bounded.\\ 

\textbf{Résumé.} Le but de notre travail est d'étudier les processus de sauts renforcés par sites par une fonction de poids sur-linéaire $w(t)= t^{\alpha}$, avec $\alpha>1$. Sur tout graphe complet $G = (V, E)$, on montre qu'il y a un sommet $v \in V$ tel que le temps total passé en $v$ tend presque sûrement vers l'infini tandis que le temps total passé dans les sommets restants est borné.
\end{abstract}

\maketitle

\section{Introduction}\label{sec:introduction}

Let $G=(V,E)$ be a finite connected, undirected graph without loops, where $V=\{1,2,...,d\}$ and $E$ respectively stand for the set of vertices and the set of edges. We consider a continuous-time jump process $X$ on the vertices of $G$ such that the law of $X$ satisfies the following condition:
\begin{enumerate}
\item[i.] at time $t\le 0$, the local time at each vertex $v\in V$ has a positive initial value $\ell^{(v)}_0$,
\item[ii.] at time $t>0$, given the $\sigma$-field  $\mathcal{F}_t$ generated by $\{X_{s},s\le t\}$, the probability that there is a jump from $X_t$ during $(t,t+h]$ to a neighbour $v$ of $X_t$ (i.e. $\{v,X_t\}\in E$) is given by
$$w\left(\ell^{(v)}_0+\int_0^t \mathbf{1}_{\{X_s=v\}}\rmd s \right)\cdot h+o(h)$$
as $h\to 0$, where $w:[0,\infty)\to(0,\infty)$ is a weight function.
\end{enumerate}

For each vertex $v\in V$, we denote by $L(v,t)=\ell^{(v)}_0+\int_0^t\mathbf{1}_{\{X_s=v\}}\rmd s$  the local time at $v$ up to time $t$ and let 
$$Z_t=\left(\frac{L(1,t)}{\ell_0+t},\frac{L(2,t)}{\ell_0+t},... ,\frac{L(d,t)}{\ell_0+t}\right)$$ stand for the (normalized) occupation measure on $V$ at time $t$, where
$\ell_0=\ell^{(1)}_0+\ell^{(2)}_0+\cdots +\ell^{(d)}_0$.

In our work, we consider the weight function $w(t)=t^{\alpha}$, for some $\alpha>0$. The jump process $X$ is called \textit{strongly vertex-reinforced} if $\alpha>1$, \textit{weakly vertex-reinforced} if $\alpha<1$ or \textit{linearly vertex-reinforced} if $\alpha=1$.

The model of discrete time edge-reinforced random walks (ERRW) was first studied by Coppersmith and Diaconis in their unpublished manuscripts \cite{Coppersmith86} and later the model of discrete time vertex-reinforced random walks (VRRW) was introduced by Pemantle in \cite{Pemantle88} and \cite{Pemantle92}. Several remarkable results about localization of ERRW and VRRW were obtained in \cite{Volkov01}, \cite{Tarres04}, \cite{Volkov06},  \cite{Benaim13} and \cite{Cotar2015}.  Wendelin Werner then proposed a model in continuous time so-called vertex reinforced jump processes (VRJP) whose linear case was first investigated by Davis and Volkov in \cite{Davis02} and \cite{Davis04}. In particular, these authors showed in \cite{Davis04} that linearly VRJP on any finite graph with $d$ vertices is recurrent, i.e. all local times are almost surely unbounded and the normalized occupation measure process converges almost surely to an element in the interior of the $(d-1)$ dimensional standard unit simplex as time goes to infinity. In \cite{Sabot15}, Sabot and Tarr\`es also obtained the limiting distribution of the centred local times process for linearly VRJP on any finite graph and showed that linearly VRJP is
actually a mixture of time-changed Markov jump processes.  Many aspects of linearly VRJP as well as its relations to ERRW and the supersymmetric hyperbolic sigma model have been well studied in recent years (see, e.g. \cite{Collevecchio2009},  \cite{Basdevant2012}, \cite{Disertori14}, \cite{Merkl16}, \cite{Sabot15}, \cite{Sabot15b}, \cite{Sabot2019} \cite{Sabot17}, \cite{Zeng16}, and \cite{Lupu2018}).

The main aim of our paper is to prove that strongly VRJP on a complete  graph $G=(V,E)$  almost surely have an infinite local time at some vertex $v$, while the local times at the remaining vertices remain bounded. The main technique of our proofs is based on the method of stochastic approximation (see, e.g. \cite{Brandiere96, Benaim96, Benaim97, Benaim99}). We organize the present paper as follows. 
In Section \ref{sec:outline}, our main Theorem and an outline of its proof are given. 
In Section \ref{sec:notation}, we give some preliminary notations as well as some results of stochastic calculus being used throughout the paper. We show in Section \ref{sec:Dyn} that the occupation measure process of strongly VRJP on a complete graph is an asymptotic pseudo-trajectory of a flow generated by a vector field. We then prove the convergence towards stable equilibria in Section \ref{sec:convergence} and the non convergence towards unstable equilibria in Section \ref{sec:nonCV}, which yields our above-mentioned main result.

\section{Main result and outline of proof}\label{sec:outline}
The main result of our paper is the following theorem:
\begin{theorem}\label{thm:mainresult}
Assume that $X$ is a strongly VRJP in a complete graph with weight function $w(t)=t^{\alpha}$, for some $\alpha>1$. Then there almost surely exists a vertex such that its local time tends to infinity while the local times at the remaining vertices remain bounded.
\end{theorem}

The main technique to prove this theorem is based on the method of stochastic approximation (see, e.g. \cite{Brandiere96, Benaim96, Benaim97, Benaim99}). The core idea of this method is to describe the asymptotic behaviour of stochastic processes (which are stochastic algorithms in the discrete setting) in terms of the behaviour of ordinary differential equations. When the sample path of a stochastic process is asymptotically close to the solution of an autonomous differential equation, it is reasonable to investigate the relation between the limiting set of this process and the set of equilibria of the associated differential equation.

Let us explain how we make use of this idea in the context of VRJP in a complete graph with super-linear weight function $w(t)=t^{\alpha}$, for some $\alpha>1$. We first make the time change: for $t>0$, set $\tilde{Z}_t=Z_{e^t-\ell_0}$ and $\tilde{X}_t=X_{e^t-\ell_0}$.
The occupation measure $\tilde{Z}$ satisfies the following equation: $$\frac{\rmd\tilde{Z}^i_t}{\rmd t}=-\tilde{Z}^i_t+\mathbf{1}_{\{\tilde{X}_t=i\}}.$$ 
Let now $t$ be a large time. Then, for every fixed $T$, the process $(X_{t+s})_{s\in [0,T]}$ evolves almost like a Markov process with generator $A_t=A(L(\cdot,t))$ (with $A(\lambda \ell)=\lambda^\alpha A(\ell)$). 
It will be remarked in Section \ref{sec:Dyn} that this diffusion has a unique invariant probability $\pi_t=\pi(Z_t)$.
Such properties will allow us to prove Theorem \ref{cvthrm} in which $\tilde{Z}$ is an asymptotic pseudo-trajectory of a semi-flow $\Phi$ generated by the vector field $F(z)=-z+\pi(z)$, i.e. for all $T>0$, the trajectory $(\tilde{Z}_{t+s}:\;s\in [0,T])$ is close as $t\to\infty$ to the trajectory of the semi-flow $(\Phi_s(\tilde{Z}_t):\;s\in [0,T])$.

In Section \ref{sec:convergence}, using Theorem \ref{cvthrm} with the fact that there is a strict Lyapounov function $H$ for the vector field $F$ (i.e. a function such that $\langle F(z),\nabla H(z)\rangle >0$ if and only if $F(z)\ne 0$), we will show that almost surely the limit set of $Z$ is a connected subset of $\mathcal{C}$, the set of equilibria of $F$ (i.e. the set of all $z$ such that $F(z)=0$). Combining with the fact that the set $\mathcal{C}$ is finite, this will prove Theorem \ref{thm:CVeq} stating  the a.s. convergence of $Z$ towards an equilibrium. In Section \ref{sec:convergence}, after having remarked that the stable equilibria of $F$ are Dirac measures $\delta_i$, $i\in V$, we will prove Theorem \ref{thm:localization} asserting that a.s. on the event $Z$ converges to $\delta_i$, $X$ eventually localizes at $i$, i.e. $L(i,\infty)=\infty$ and $\sum_{j\ne i} L(j,\infty)<\infty$.

Finally in Section \ref{sec:nonCV} we will prove Theorem \ref{thm:nonCV_VRJP} wherein  a.s. $Z$ does not converge towards an unstable equilibrium. 
In preparation for the proof of this theorem, we will demonstrate Theorem \ref{THM:nonCV} which is a general non convergence theorem for a class of finite variation c\`adl\`ag processes. 
To do so, we will follow (and correct) arguments from the proof of a theorem by Brandi\`ere and Duflo (see \cite{Brandiere96} or \cite{Duflo1996}), but use a new idea as follows. We will first show that, under additional assumptions, an asymptotic pseudo-trajectory converging towards an unstable equilibrium is attracted exponentially fast towards the unstable manifold of this equilibrium.
This will allow the proof of the non convergence theorem to be reduced to the case where the unstable equilibrium has no stable direction. 
Theorem \ref{thm:nonCV_VRJP} will then permit to conclude the proof of Theorem \ref{thm:mainresult}.

\section{Preliminary notations and remarks}\label{sec:notation}
Throughout this paper, we denote by $\Delta$ and $T\Delta$ respectively the $(d-1)$ dimensional standard unit simplex in $\mathbb{R}^d$ and its tangent space, which are defined by
\begin{align*}
&\Delta=\{z=(z_1,z_2,...,z_d)\in\mathbb{R}^d:z_1+z_2+\cdots +z_d=1, z_j\ge0, j=1,2,\cdots ,d \},\\
&T\Delta=\{z=(z_1,z_2,...,z_d)\in\mathbb{R}^d:z_1+z_2+\cdots +z_d=0\}.
\end{align*}
Also, let $\|\cdot\|$ and $\left\langle\cdot,\cdot\right\rangle$ denote the Euclidean norm and the Euclidean scalar product in $\mathbb{R}^d$ respectively.

For a c\`adl\`ag process $Y=(Y_t)_{t\ge0}$, we denote by $Y_{t-}=\lim_{s\to t-}Y_t$ and $\Delta Y_t=Y_t-Y_{t-}$ respectively the left limit and the size of the jump of $Y$ at time $t$. Let $[Y]$ be as usual the \textit{quadratic variation} of the process $Y$. Note that, for a c\`adl\`ag finite variation process $Y$, we have $[Y]_t=\sum_{0<u\le t}(\Delta Y_u)^2$. In the next sections, we will use the following useful well-known results of stochastic calculus (see e.g. \cite{Jacod2003} and \cite{Protter}):  

1. \textbf{Change of variables formula.} (see Theorem 31, p.~78 in \cite{Protter}) Let $A=(A^1_t,A^2_t,\dots,A^d_t)_{t\ge0}$ be a c\`adl\`ag finite variation process in $\mathbb{R}^d$ and let $f:\mathbb{R}^d\to \mathbb{R}$ be a $C^1$ function. Then for $ t\ge 0$,
$$f(A_t)-f(A_0)=\sum_{i=1}^d\int_0^t  \partial_i f(A_{u-}) \rmd A^i_u +\sum_{0<u\le t}\left(\Delta f(A_u)-\sum_{i=1}^d\partial _i f(A_{u-})\Delta A_u^i\right).$$

2. Let $M=(M_t)_{t\ge0}$ be a c\`adl\`ag locally square-integrable martingale with finite variation in $\mathbb{R}$. A well-known result is that if $\E[[M]_t]<\infty$ for all $t$, then $M$ is a true martingale (see e.g. Corollary 3, p.~73 in \cite{Protter}).
The change of variable formula implies that 
$$M_t^2=M_0^2+\int_0^t 2M_{s-}\rmd M_s+[M]_t.$$
Let $\langle M\rangle$ denote the \textit{angle bracket} of $M$, i.e. the unique predictable non-decreasing process such that $M^2-\langle M\rangle$ is a local martingale. Note that $[M]-\langle M\rangle$ is also a local martingale. 

Let $H$ be a locally bounded predictable process and denote by $H\cdot M$ the c\`adl\`ag locally square-integrable martingale with finite variation defined by $(H\cdot M)_t=\int_0^t H_{s} dM_s$. Recall the following rules:
$$\langle H\cdot M\rangle_t=\int_0^t H^2_{s}\rmd\langle M\rangle_s \quad \text{ and }\quad [H\cdot M]_t=\int_0^t H^2_{s}\rmd [M]_s$$
(see Theorem 4.40, p.~48 and the statement 4.54, p.~55 in \cite{Jacod2003}). Recall also that $H\cdot M$ is a square integrable martingale if and only if for all $t>0$, $\E[\langle H\cdot M\rangle_t]<\infty$.

%


3. \textbf{Integration by part formula.} (see Corollary 2, p.~68 in \cite{Protter}) Let $X=(X)_{t\ge0}$ and $Y=(Y)_{t\ge0}$ be two c\`adl\`ag finite variation processes in $\mathbb{R}$. Then for $t\ge s\ge 0$,
$$X_tY_t-X_sY_s=\int_s^t X_{u-}\rmd Y_u+\int_s^t Y_{u-}\rmd X_u+[X,Y]_t-[X,Y]_s,$$
where we recall that $[X,Y]$ is the \textit{covariation} of $X$ and $Y$, computed as $[X,Y]_t=\sum_{0<u\le t}\Delta X_u \Delta Y_u$.

4. \textbf{Doob's maximal inequality.} (see Theorem 20, p.~11 in \cite{Protter})  Let $X=(X)_{t\ge0}$ be a c\`adl\`ag martingale adapted to a filtration $(\mathcal F_t)_{t\ge0}$. Then for any $p>1$ and $t\ge s\ge 0$, 
$$ \E[\sup_{s\le u\le t}|X_u|^p \big|\mathcal F_s ]\le\left(\frac{p}{p-1}\right)^p{\E}[\vert X_t\vert^p\big|\mathcal F_s].$$

5. \textbf{Burkholder-Davis-Gundy inequality.} (see Theorem 48, p.~193 in \cite{Protter}) Let $X=(X)_{t\ge0}$ be a c\`adl\`ag martingale adapted to a filtration $(\mathcal F_t)_{t\ge0}$ such that $X_0=0$. For each $1\le p<\infty$ there exist positive constants $c_p$ and $C_p$ depending on only $p$ such that 
$$\displaystyle c_p{\E}\left[ [X]^{p/2}_t\big|\mathcal F_s\right]\le{\E}\left[\sup_{s\le u\le t}|X_u|^p \big|\mathcal F_s\right]\le C_p{\E}\left[ [X]^{p/2}_t \big|\mathcal F_s\right].$$

\section{Dynamics of the occupation measure process\label{sec:Dyn}}

We study in this section the dynamics of the occupation process of VRJP on a complete graph with weight function $w(t)=t^{\alpha},\ \alpha>0$. In particular, we show in Theorem \ref{cvthrm} below that, after a time scaling, the occupation measure process is asymptotically close to the unique solution of an autonomous system of ordinary differential equations. Our approach is inspired by the theory of asymptotic pseudo-trajectories and stochastic approximation techniques introduced in \cite{Benaim99}.

For $t>0$ which is not a jumping time of $X$, we have
\begin{equation}\label{ode}
\frac{\rmd Z_t}{\rmd t}=\frac{1}{\ell_0+t}\left(-Z_t+I[{X_t}]\right),
\end{equation}
where for each matrix $M$, $M[j]$ is the $j$-th row vector of $M$ and $I$ is as usual
the identity matrix. Observe that the process $Z=(Z_t)_{t\ge0}$ always takes values in the interior of the standard unit simplex $\Delta$. 

For fixed $t\ge0$, let $A_t$ be the $d$-dimensional infinitesimal generator matrix such that the $(i,j)$ element is defined by
$$
A^{i,j}_t:=\left\lbrace\begin{matrix}\mathbf{1}_{(i,j)\in E} w_t^{(j)}, \ \ \ \ \  \ i\neq j; \\ \displaystyle - \sum_{k\in V,(k,i)\in E} w_t^{(k)},  i=j,\end{matrix}\right.$$ 
where we have set $w^{(j)}_t=w(L(j,t))=L(j,t)^{\alpha}$ for each $j\in V$. Also, let $w_t=w^{(1)}_t+w^{(2)}_t+\cdots +w^{(d)}_t$. 
Note that 
$$\pi_t:=\left(\frac{w^{(1)}_t}{w_t},\frac{w^{(2)}_t}{w_t},\cdots ,\frac{w^{(d)}_t}{w_t}\right)$$
is the unique invariant probability measure of $A_t$ in the sense that $\pi_tA_t=0$. Since $\pi_t$ can be rewritten as a function of $Z_t$, we will also use the notation $\pi_t=\pi(Z_t)$, where we define the function $\pi: \Delta\to \Delta$, such that for each $z=(z_1,z_2,...,z_d)\in \Delta$,
$$\pi(z)=\left( \frac{z_1^{\alpha}}{z_1^{\alpha}+\cdots +z_d^{\alpha}},\cdots , \frac{z_d^{\alpha}}{z_1^{\alpha}+\cdots +z_d^{\alpha}} \right).$$
Now we can rewrite the equation \eqref{ode} as 
\begin{equation}\label{ode2}\frac{\rmd Z_t}{\rmd t}=\frac{1}{\ell_0+t}(-Z_t+\pi_t) + \frac{1}{\ell_0+t}(I[X_t]-\pi_t).\end{equation}
Changing variable $\ell_0+t=e^u$ and denoting $\tilde{Z}_u=Z_{e^u-\ell_0}$ for $u>0$, we can transform the equation \eqref{ode2} as
\begin{equation*}\frac{\rmd \tilde{Z}_u}{\rmd u}=-\tilde{Z}_u+\pi(\tilde{Z}_u) + (I[X_{e^u-\ell_0}]-\pi_{e^u-\ell_0}).\end{equation*}
Taking integral of both sides, we obtain that
\begin{equation}\label{ode3}\tilde{Z}_{t+s}-\tilde{Z}_{t}=\int_t^{t+s}\left(-\tilde{Z}_u+\pi(\tilde{Z}_u) \right) \rmd u +\int_{e^t-\ell_0}^{e^{t+s}-\ell_0} \frac{ I[X_u]-\pi_u}{\ell_0+u}\rmd u.\end{equation}

%

Let us fix a function $f:\{1,\dots,d\}\to\mathbb{R}$. For $t>0$, define $A_tf:\{1,\dots,d\}\to\mathbb{R}$ by $A_tf(i)=\sum_j A_t^{i,j}f(j)$ and define the process $M^f$ by 
$$M^f_t=f(X_t)-f(X_0)-\int_0^t A_sf(X_s) \rmd s.$$ 
\begin{lemma}
The process $M^f$ is a martingale, with $[M^f]_t=\sum_{0<s\le t} (\Delta f(X_s))^2$ and 
\begin{equation}\label{eq:anglebracket}
\langle M^f\rangle_t = \int_0^t \big(A_sf^2(X_s)-2f(X_s)A_sf(X_s)\big) \rmd s.
\end{equation} 
\end{lemma}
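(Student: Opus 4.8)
The plan is to realize $M^f$ as the compensated sum of the jumps of the process $s\mapsto f(X_s)$ and then to read both bracket formulas directly off the compensator. First I would record a deterministic bound on the jump rates: since $L(j,s)\le\ell_0+s$, for $s\le t$ we have $w^{(j)}_s=L(j,s)^\alpha\le(\ell_0+t)^\alpha$, so the total rate of leaving any state is at most $(d-1)(\ell_0+t)^\alpha$ on $[0,t]$. Hence the number of jumps of $X$ on $[0,t]$ is stochastically dominated by a Poisson variable with finite mean; in particular $X$ is a pure-jump finite-variation process, $s\mapsto f(X_s)$ is c\`adl\`ag of finite variation with $f(X_t)-f(X_0)=\sum_{0<s\le t}\Delta f(X_s)$, and adding the absolutely continuous term $\int_0^\cdot A_sf(X_s)\,ds$ introduces no jumps. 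Thus $M^f$ is c\`adl\`ag of finite variation with $\Delta M^f_s=\Delta f(X_s)$, which already yields $[M^f]_t=\sum_{0<s\le t}(\Delta M^f_s)^2=\sum_{0<s\le t}(\Delta f(X_s))^2$.

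For the martingale property I would introduce the jump measure $\mu(ds,dj)=\sum_{u:\Delta X_u\ne0}\delta_{(u,X_u)}(ds,dj)$ of $X$. Property (ii) of the model says exactly that the $(\mathcal F_s)$-predictable compensator of $\mu$ is $\nu(ds,dj)=\big(\sum_{k\ne X_{s-}}A^{X_{s-},k}_s\,\delta_k(dj)\big)\,ds$, i.e.\ from state $i$ a jump to $j\ne i$ occurs with instantaneous rate $A^{i,j}_s$. Using $\sum_kA^{i,k}_s=0$ to rewrite $\sum_{k\ne i}A^{i,k}_s\big(f(k)-f(i)\big)=A_sf(i)$, this gives
$$M^f_t=f(X_t)-f(X_0)-\int_0^tA_sf(X_s)\,ds=\int_0^t\!\!\int\big(f(j)-f(X_{s-})\big)\,(\mu-\nu)(ds,dj).$$
Since $f$ takes finitely many values and $\int_0^t\!\int\nu(ds,dj)\le(d-1)(\ell_0+t)^\alpha t<\infty$, the integrand lies in the relevant class and $M^f$ is a local martingale; moreover $\E[[M^f]_t]\le(2\|f\|_\infty)^2\,\E[\#\{\text{jumps on }[0,t]\}]<\infty$, so by the criterion recalled in Section~2 (a c\`adl\`ag locally square-integrable finite-variation local martingale with $\E[[M]_t]<\infty$ is a true martingale) $M^f$ is in fact a true, square-integrable martingale.

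It remains to identify $\langle M^f\rangle$, which by definition is the predictable compensator of $[M^f]_t=\sum_{0<s\le t}(\Delta f(X_s))^2$. As a jump from $i$ to $j\ne i$ contributes $(f(j)-f(i))^2$ at rate $A^{i,j}_s$, that compensator equals $\int_0^t\sum_{j\ne X_s}A^{X_s,j}_s\big(f(j)-f(X_s)\big)^2\,ds$, and I would finish with the elementary pointwise identity
$$\sum_{j\ne i}A^{i,j}_s\big(f(j)-f(i)\big)^2=A_sf^2(i)-2f(i)A_sf(i),$$
obtained by expanding the square and using $\sum_jA^{i,j}_s=0$; this is \eqref{eq:anglebracket}. (Alternatively, one can apply the same Dynkin construction to $f^2$, obtaining a martingale $M^{f^2}$, and combine the identity $(M^f_t)^2=2\int_0^tM^f_{s-}\,dM^f_s+[M^f]_t$ from Section~2 with the definition of $\langle M^f\rangle$, matching the predictable finite-variation parts.)

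The only genuinely non-routine point is justifying that $\nu$ above is the predictable compensator of $\mu$ directly from the informal infinitesimal description in (ii); once that is in place — together with the harmless deterministic bound on jump rates, which upgrades all the local martingales to true, square-integrable ones — both bracket formulas are just the compensators of explicit pure-jump additive functionals and collapse to the one-line algebraic identity above.
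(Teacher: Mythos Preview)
Your proof is correct and takes a genuinely different route from the paper. The paper establishes the martingale property by an elementary Riemann-sum argument: it computes $\E[f(X_{t+h})-f(X_t)\mid\mathcal F_t]=A_tf(X_t)\,h+o(h)$ from the infinitesimal description and then partitions $[s,t]$ into $n$ pieces and passes to the limit. For the angle bracket it expands $(M^f_t)^2$ algebraically, recognizes $M^{f^2}$ and other martingale terms, and reads off the finite-variation predictable part by hand. You instead cast everything in the random-measure/compensator framework: you identify $M^f$ as $\int (f(j)-f(X_{s-}))\,(\mu-\nu)(ds,dj)$, which immediately gives both the martingale property and the $[M^f]$ formula, and then obtain $\langle M^f\rangle$ as the compensator of $[M^f]$, reducing to the one-line identity $\sum_{j\ne i}A^{i,j}_s(f(j)-f(i))^2=A_sf^2(i)-2f(i)A_sf(i)$. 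Your approach is cleaner and more systematic (and makes the $[M^f]$ formula a triviality), at the cost of invoking the compensator of the jump measure as a black box---which, as you rightly flag, is the one step not literally proved from the informal description (ii); the paper's bare-hands computation avoids this by working directly from the $o(h)$ statement, so it is more self-contained but also more ad hoc. Your deterministic bound on the total jump rate, used to upgrade local to true square-integrable martingales, is a useful explicit ingredient that the paper leaves implicit.
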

\begin{proof}
Let us first prove that $M^f$ is a martingale. For small $h>0$, we have
\begin{align*}
\E[f(X_{t+h})-f(X_t)|\mathcal{F}_t]
&= \sum_{j\sim X_t} (f(j)-f(X_t)) \P[X_{t+h}=j|\mathcal{F}_t]\\
&= \sum_{j\sim X_t} (f(j)-f(X_t)) w^{(j)}_t.h+o(h)\\
&= A_tf(X_t).h + o(h).
\end{align*}

Let us fix $0<s<t$ and define $t_j=s+j(t-s)/n$ for $j=0,1,\dots,n$. Note that  
\begin{align*} \E\left[ f(X_{t})-f(X_s)|\ \mathcal{F}_s  \right] & =\E\left[\left. \sum_{j=1}^n \E[f(X_{t_{j}})-f(X_{t_{j-1}})\ |\ \mathcal{F}_{t_{j-1}} ]\ \right|\ \mathcal{F}_s \right]\\
& =\E\left[ \left.\sum_{j=1}^n A_{t_{j-1}}f(X_{t_{j-1}})(t_j-t_{j-1}) +n\cdot o\left(\frac{t-s}{n}\right) \ \right| \ \mathcal{F}_s  \right].
\end{align*}
Since the left hand side is independent on $n$, using Lebesgue's dominated convergence theorem and taking the limit of the random sum under the expectation sign on the right hand side, we obtain that
$$\E\left[ f(X_{t})-f(X_s)|\ \mathcal{F}_s  \right)=\E\left[\int_s^t A_uf(X_u) \rmd u\ | \ \mathcal{F}_s \right].$$ Thus, $\E[ M^f_t|\ \mathcal{F}_s  ]=M_s$.

To prove \eqref{eq:anglebracket}, 
we calculate (to simplify the calculation, we will suppose that $f(X_0)=0$).
\begin{align*}
(M^f_t)^2
=&\; f^2(X_t)- 2\left(M^f_t+\int_0^t A_sf(X_s)\rmd s\right)\int_0^t A_sf(X_s)\rmd s + \left(\int_0^t A_sf(X_s)\rmd s \right)^2\\
=&\; M^{f^2}_t+\int_0^t A_sf^2(X_s)\rmd s - 2M^f_t\int_0^t A_sf(X_s)\rmd s - \left(\int_0^t A_sf(X_s)\rmd s \right)^2\\
=&\; N_t + \int_0^t A_sf^2(X_s)\rmd s\\
&\quad - 2\int_0^t M^f_s A_sf(X_s)\rmd s - 2\int_0^t A_sf(X_s)\left(\int_0^s A_uf(X_u)\rmd u\right)\rmd s \\
=&\; N_t + \int_0^t A_sf^2(X_s)\rmd s - 2\int_0^t f(X_s) A_sf(X_s)\rmd s,
\end{align*}
where the process $N$, defined by $N_t:=M_t^{f^2}-2\int_{0}^t\left(\int_0^sA_uf(X_u)\rmd u\right)\rmd M^f_s $, is a martingale.
The lemma is proved.
\end{proof}

Let $M$ be the process in $\mathbb{R}^d$ defined by
$$M_t=I[X_t]-\int_0^t A_s[X_s]\rmd s\quad \text{for } t\ge0 .$$
Then for each $j$, $M^j$ is a martingale since $M^j=M^{\delta_j}$, with $\delta_j$ defined by $\delta_j(i)=1$ if $i=j$ and $\delta_j(i)=0$ if $i\neq j$. 
We also have that 
\begin{equation}\label{eq:crochetMj}
\langle M^j\rangle_t = \int_0^t \Lambda^j_s \rmd s,
\end{equation}
with $\Lambda^j$ defined by
\begin{equation}\label{eq:defLambdaj}
\Lambda^j_t= \left\lbrace \begin{array}{ll}
w^{(j)}_t & \text{ if }  \quad X_t\sim j,\\
\sum_{k\sim X_t} w^{(k)}_t  &\text{ if }\quad   X_t=j, \\
0 & \text{ otherwise. } \\
\end{array}
\right.
\end{equation}


\begin{lemma}\label{noise}
Assume that $G=(V,E)$ is a complete graph and $w(t)=t^{\alpha}$ with $\alpha>0$. Then almost surely 
\begin{equation}
\label{bound}\lim_{t\to\infty} \sup_{1\le c\le C} \left\|\int_{t-\ell_0}^{ct-\ell_0} \frac{I[X_s]-\pi_s}{\ell_0+s}\rmd s\right\| =0
\end{equation}
for each $C>1$.
\end{lemma}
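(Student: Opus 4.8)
The plan is to control the ``noise'' term by writing $I[X_s]-\pi_s = dM_s + A_s[X_s]\,ds - \pi_s\,ds$ and showing each resulting contribution vanishes after division by $\ell_0+s$ and integration over a geometric time window. Since $G$ is complete, $A_s[X_s]$ has an explicit form: if $X_s=i$, then $A_s^{i,j}=w_s^{(j)}$ for $j\neq i$ and $A_s^{i,i}=-\sum_{k\neq i}w_s^{(k)} = -(w_s-w_s^{(i)})$, so that $A_s[X_s]-\pi_s$ has a manageable size. First I would estimate the martingale contribution $\int_{t-\ell_0}^{ct-\ell_0}(\ell_0+s)^{-1}\,dM_s$. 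By the formula \eqref{eq:crochetMj} for $\langle M^j\rangle$ and the bound $\Lambda^j_s\le d\cdot w_s^{\max}\le d(\ell_0+s)^{\alpha}$ (each local time is at most $\ell_0+s$), the process $N^{j}_t := \int_0^t (\ell_0+s)^{-1}\,dM^j_s$ has angle bracket bounded by $\int_0^t d(\ell_0+s)^{\alpha-2}\,ds$. This diverges when $\alpha>1$, so $N^j$ is not $L^2$-bounded directly; instead I would use the Burkholder--Davis--Gundy inequality together with the fact that $[M^j]_t = \sum_{0<s\le t}(\Delta \delta_j(X_s))^2 \le$ (number of jumps up to $t$), and then rescale: on the window $[t-\ell_0, ct-\ell_0]$ the increments of $N^j$ should be shown to go to $0$ via a Borel--Cantelli argument along a geometric subsequence $t_n = c^n$, controlling $\sup_{t_n\le u\le t_{n+1}}|N^j_u - N^j_{t_n}|$ by Doob's inequality applied to the stopped/rescaled martingale.

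Next I would handle the drift term $\int_{t-\ell_0}^{ct-\ell_0}(\ell_0+s)^{-1}(A_s[X_s]-\pi_s)\,ds$. Here the key observation is cancellation: after the change of variable $\ell_0+s=e^u$ this becomes $\int (A_{e^u-\ell_0}[X]-\pi)\,du$ over a bounded $u$-window of length $\log c$, but the integrand itself is large (order $(\ell_0+s)^{\alpha-1}$). The point is that $A_s[X_s]$, suitably normalized, is the generator pushing toward $\pi_s$, and the genuine reinforcement dynamics of $Z$ — equation \eqref{ode3} — already isolates $-\tilde Z_u+\pi(\tilde Z_u)$ as the ``main'' vector field; so the claim \eqref{bound} is really the statement that the \emph{remaining} term $I[X_u]-\pi_u$ averages out. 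I would therefore not expand $A_s[X_s]$ but keep the decomposition $I[X_s]-\pi_s$ as is, split off only the martingale part $M$, and argue that the finite-variation remainder $\int (\ell_0+s)^{-1}(A_s[X_s]-\pi_s)\,ds$ is itself $o(1)$ on geometric windows. To see this, note $A_s[X_s]^{(j)} - \pi_s^{(j)}$ for $X_s=i\neq j$ equals $w_s^{(j)} - w_s^{(j)}/w_s = w_s^{(j)}(1-1/w_s)$, which is close to $w_s^{(j)}$ for large $s$; integrating $\int (\ell_0+s)^{-1} w_s^{(j)}\,ds$ against $\mathbf 1_{X_s=i}$ and summing over $i$ recovers essentially $\int (\ell_0+s)^{-1} w_s^{(j)}\,ds$, which must be compared with the $j$-th local time growth. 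The cancellation comes from pairing this with the $-\pi_s$ contribution when $X_s=j$: one gets $-\sum_{k\neq j}w_s^{(k)} - (\text{small})$, and the whole thing telescopes against $dL(j,s)=\mathbf 1_{X_s=j}\,ds$.

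The main obstacle, I expect, is precisely this drift estimate: unlike the classical linearly-reinforced case ($\alpha=1$) where $A_s[X_s]-\pi_s$ is bounded, here it grows like $(\ell_0+s)^{\alpha-1}\to\infty$, so one cannot simply bound it and divide by $\ell_0+s$. The resolution should exploit that, although each instantaneous value is large, the process spends asymptotically \emph{all} its time at a single vertex (this is, morally, what the whole paper is proving), so $I[X_s]$ and $\pi_s$ both concentrate on the same coordinate and their difference is small where it matters; but since Lemma \ref{noise} is used as an \emph{input} to that conclusion, one must instead run a self-contained argument — e.g. bounding $\mathbf{E}$ of the squared increment of the rescaled process uniformly and using that $\sum_s (\ell_0+s)^{-2}\Lambda^j_s\,ds$ type quantities, when the local time at $j$ is bounded, are summable, and when it is unbounded at exactly one vertex, produce a drift that is absorbed into $-\tilde Z_u + \pi(\tilde Z_u)$. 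I would organize the proof as: (1) martingale part via BDG plus geometric Borel--Cantelli; (2) finite-variation part via the explicit complete-graph form of $A_s$ and a comparison of $\int(\ell_0+s)^{-1}w_s^{(j)}\,ds$ with $L(j,\cdot)$; (3) combine over the window $1\le c\le C$ by monotonicity of the partial integrals in $c$, reducing the $\sup$ to the endpoint $c=C$ up to a uniform error.
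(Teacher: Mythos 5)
Your decomposition misses the one algebraic fact that makes this lemma work, and as a result both of the pieces you produce genuinely fail to converge. On a complete graph one has the exact pointwise identity $\pi_s-I[X_s]=\frac{1}{w_s}A_s[X_s]$ (check it coordinate by coordinate: if $X_s=i$, the $j$-th entry of $A_s[X_s]/w_s$ is $w_s^{(j)}/w_s=\pi_s^{(j)}$ for $j\ne i$, and $-(w_s-w_s^{(i)})/w_s=\pi_s^{(i)}-1$ for $j=i$). Substituting this and then $A_s[X_s]\,ds=dI[X_s]-dM_s$ turns the integral in \eqref{bound} into $\int \frac{dI[X_s]-dM_s}{(\ell_0+s)w_s}$: the martingale and the finite-variation parts now carry the \emph{extra} factor $w_s\ge k s^\alpha$ in the denominator. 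An integration by parts then leaves boundary terms of size $O(t^{-(\alpha+1)})$, a Stieltjes term of the same size, and the martingale $\int (\ell_0+s)^{-1}w_s^{-1}\,dM_s$, whose second moment over $[t-\ell_0,Ct-\ell_0]$ is $O(t^{-(\alpha+1)})$ because $[M^i]$ is controlled by the jump count, itself dominated by a Poisson process of intensity $(Ct)^\alpha$. Markov plus Borel--Cantelli along $t=C^n$ finishes, essentially as in your step (3).

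In your version the martingale term is $\int(\ell_0+s)^{-1}dM_s$, and you correctly compute that its bracket increment over $[t,ct]$ is of order $\int_t^{ct}s^{\alpha-2}ds\sim t^{\alpha-1}\to\infty$ (the same conclusion follows from $[M^j]_{ct}-[M^j]_t\sim t^{\alpha+1}$); no amount of BDG or geometric Borel--Cantelli makes a martingale with diverging bracket increments tend to zero. Likewise your drift term $\int(\ell_0+s)^{-1}(A_s[X_s]-\pi_s)\,ds$ has integrand of order $s^{\alpha-1}$, and the ``cancellation/telescoping against $dL(j,s)$'' you invoke is exactly the substitution $A_s[X_s]\,ds=dI[X_s]-dM_s$ --- but it only helps \emph{after} the factor $1/w_s$ has been extracted, which your splitting never produces (note also that your opening identity equates the function $I[X_s]-\pi_s$ with a differential, conflating $I[X_s]\,ds$ with $dI[X_s]$). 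So the proposal has a genuine gap: it correctly identifies the obstacles but not the complete-graph identity $\pi_s-I[X_s]=w_s^{-1}A_s[X_s]$ that removes them.
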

\begin{proof}  
Note that, for $t\ge 0$,
$$\pi_t-\displaystyle I[X_t]=\frac{1}{w_t}A_t[X_t].$$
Using the integration by part formula, we obtain the following identity for each $c\in [1,C]$
\begin{align*}
\int_{t-\ell_0}^{ct-\ell_0} \frac{\pi_s-I[X_s]}{\ell_0+s}\rmd s&=\int_{t-\ell_0}^{ct-\ell_0} A_s[X_s]\frac{\rmd s}{(\ell_0+s)w_s}\\
&= \left(\frac{I[X_{ct-\ell_0}]}{ctw_{ct-\ell_0}}-\frac{I[X_{t-\ell_0}]}{tw_{t-\ell_0}}\right)\\
& - \int_{t-\ell_0}^{ct-\ell_0} I[X_s] \frac{\rmd}{\rmd s}\left(\frac{1}{(s+\ell_0)w_s}\right)\rmd s\\
& - \int_{t-\ell_0}^{ct-\ell_0} \frac{\rmd M_s}{(s+\ell_0)w_s}. 
 \end{align*}

Observe that for some positive constant $k$, $w_s\ge k s^\a$ (which is easy to prove, using the fact that $L(1,t)+L(2,t)+\cdots +L(d,t)=\ell_0+t$). We now estimate the terms in the right hand side of the above-mentioned identity. In the following, the positive constant $k$ may change from lines to lines and only depends on $C$ and $\ell_0$.
First,
\begin{equation}\label{first}\left\|\frac{I[X_{ct-\ell_0}]}{ctw_{ct-\ell_0}}-\frac{I[X_{t-\ell_0}]}{tw_{t-\ell_0}}\right\| \le    k/ {t^{\a+1}}.
\end{equation}
Second, for $s\in [t,ct]$ which is not a jump time, we have
\begin{align*}
\frac{\rmd}{\rmd s}\left(\frac{1}{(\ell_0+s)w_s}\right) = & -\left(\frac{1}{(\ell_0+s)^2w_s}+\frac{1}{(\ell_0+s)w^2_s}\frac{\rmd w_s}{\rmd s}\right).
\end{align*}
When $s$ is not a jump time, it is easy to check that $\left|\frac{\rmd w_s}{\rmd s}\right|\le \a (\ell_0+s)^{\a-1}$. Therefore, for $s\in [t,ct]$ which is not a jump time,
$$\left|\frac{\rmd}{\rmd s}\left(\frac{1}{(\ell_0+s)w_s}\right)\right| \le  k/s^{2+\a}$$
and thus,
\begin{equation}\label{second}\left\|\int_{t-\ell_0}^{ct-\ell_0} I[X_s] \frac{\rmd}{\rmd s}\left(\frac{1}{(\ell_0+s)w_s}\right) \rmd s\right\| \le  k/t^{\a+1}.
\end{equation}
And at last (using Doob's inequality), for $i\in\{1,2,\cdots ,d\}$,
\begin{eqnarray*}
\E\left[\sup_{1\le c\le C} \left|\int_{t-\ell_0}^{ct-\ell_0} \frac{\rmd M^i_s}{(\ell_0+s)w_s}\right|^2\right]
&\le& 4\ \E\left[\left(\int_{t-\ell_0}^{Ct-\ell_0} \frac{\rmd M^i_s}{(\ell_0+s)w_s}\right)^2\right].
\end{eqnarray*}
Observe that in our setting, for $i\in\{1,2,\cdots ,d\}$, $(\Delta I^i_s)^2=1$ if $s$ is a jump time between $i$ and another vertex. Thus $[M^1]_t+[M^2]_t+\cdots +[M^d]_t$ is just twice the number of jumps up to time $t$ of $X$. So,  for $i \in\{1,2,\cdots ,d\}$,
\begin{eqnarray*}
\E\left[\left(\int_{t-\ell_0}^{Ct-\ell_0} \frac{\rmd M^i_s}{(\ell_0+s)w_s}\right)^2\right] 
&=& \E\left[\int_{t-\ell_0}^{Ct-\ell_0} \frac{\rmd [M^i]_s}{(\ell_0+s)^2 w_s^2}\right]\\
&\le& \frac{k}{t^{2(\a+1)}} \E\left[ [M^i]_{Ct-\ell_0}-[M^i]_{t-\ell_0}\right]\\
&\le& \frac{k}{t^{2(\a+1)}} (Ct)^\a (C-1)t,
\end{eqnarray*}
where in the last inequality,  we have used the fact that the number of jumps in $[t-\ell_0,Ct-\ell_0]$ is dominated by the number of jumps of a Poisson process with constant intensity $(C t)^\a$ in $[t-\ell_0,Ct-\ell_0]$. Therefore,
\begin{eqnarray}\label{third}
\E\left[\sup_{1\le c\le C} \left\|\int_{t-\ell_0}^{ct-\ell_0} \frac{\rmd M_s}{(\ell_0+s)w_s}\right\|^2\right]
&\le& \frac{k}{t^{\alpha+1}}.
\end{eqnarray}
From (\ref{first}), (\ref{second}), (\ref{third}) and by using Markov's inequality, we have
\begin{equation}\label{Markov}\P\left[\sup_{1\le c\le C} \left\|\int_{t-\ell_0}^{ct-\ell_0} \frac{I[X_s]-\pi_s}{\ell_0+s}\rmd s\right\| \ge \frac{1}{t^{\gamma}}\right]\le  \frac{k}{t^{\alpha+1-2\gamma}}
\end{equation}
for every $0<\gamma\le \frac{\alpha+1}{2}$.
Using the Borel-Cantelli lemma, we thus obtain
$$\limsup_{n\to\infty}\sup_{1\le c\le C} \left\|\int_{C^n-\ell_0}^{cC^n-\ell_0} \frac{I[X_s]-\pi_s}{\ell_0+s}\rmd s\right\| =0.$$ 
Moreover, for $C^n\le t\le C^{n+1}$, we have
\begin{align*}  \sup_{1\le c\le C} \left\|\int_{t-\ell_0}^{ct-\ell_0} \frac{I[X_s]-\pi_s}{\ell_0+s}\rmd s\right\|   & \le \left\|\int_{C^n-\ell_0}^{t-\ell_0} \frac{I[X_s]-\pi_s}{\ell_0+s}\rmd s\right\|  +\sup_{1\le c\le C} \left\|\int_{C^n-\ell_0}^{\min(ct,C^{n+1})-\ell_0} \frac{I[X_s]-\pi_s}{\ell_0+s}\rmd s\right\|\\
&+ \sup_{1\le c\le C} \left\|\int^{\max(ct,C^{n+1})-\ell_0}_{C^{n+1}-\ell_0} \frac{I[X_s]-\pi_s}{\ell_0+s}\rmd s\right\|
\\
&  \le 2\sup_{1\le c\le C} \left\|\int_{C^n-\ell_0}^{cC^n-\ell_0} \frac{I[X_s]-\pi_s}{\ell_0+s}\rmd s\right\| \\
&+\sup_{1\le c\le C} \left\|\int_{C^{n+1}-\ell_0}^{cC^{n+1}-\ell_0} \frac{I[X_s]-\pi_s}{\ell_0+s}\rmd s\right\|.
\end{align*}
This inequality immediately implies (\ref{bound}). 
\end{proof}

From now on, we always assume that $w(t)=t^{\alpha}$, $\alpha>1$ and $G=(V,E)$ is a complete graph. Let us define the vector field $F:\Delta\to T\Delta $ such that $F(z)=-z +\pi(z)$ for each $z\in \Delta$. We also remark that for each $z=(z_1,z_2,\cdots,z_d)\in \Delta$,
\begin{align}\label{vecF}F(z)=\left(-z_1+ \frac{z_1^{\alpha}}{z_1^{\alpha}+\cdots +z_d^{\alpha}},\cdots ,-z_d+ \frac{z_d^{\alpha}}{z_1^{\alpha}+\cdots +z_d^{\alpha}} \right).\end{align}

A continuous map $\Phi: \mathbb{R}_+\times \Delta \to \Delta$ is called a \textit{semi-flow} if $\Phi(0,\cdot):\Delta \to \Delta$ is the identity map and $\Phi$ has the semi-group property, i.e. $\Phi({t+s},\cdot)=\Phi(t,\cdot)\circ \Phi(s,\cdot)$ for all $s,t\in \mathbb{R}_{+}$. 

Now for each $z^0\in \Delta$, let $\Phi_t(z^0)$ be the solution of the differential equation  
\begin{equation}\label{odeF}\left\lbrace
\begin{array}{ll}
\displaystyle 
\frac{\rmd}{\rmd t}z(t)  = & F(z(t)),\ t>0;\\
 z(0)\ \ \ = & z^0.
\end{array}
\right.
\end{equation}
Note that $F$ is Lipschitz. Thus the solution $\Phi_t(z^0)$ can be extended for all $t\in \mathbb{R}_+$ and $\Phi:\mathbb{R}_+\times \Delta \to \Delta$ defined by $\Phi(t,z)=\Phi_t(z)$ is a semi-flow.

\begin{theorem}\label{cvthrm} $\tilde{Z}$ is an asymptotic pseudo-trajectory of the semi-flow $\Phi$, i.e.
for all $T>0$,
\begin{equation}\label{pseu}
\lim_{t\to\infty} \sup_{0\le s\le T} \left\| \tilde{Z}_{t+s}- \Phi_s(\tilde{Z}_{t})\right\|=0.\ \text{a.s}.
\end{equation}
Furthermore, $\tilde{Z}$ is an -$\frac{\alpha+1}{2}$-asymptotic pseudo-trajectory, i.e. for 
\begin{equation}\label{pseu2}\limsup_{t\to\infty}\frac{1}{t}\log\left( \sup_{0\le s\le T} \| \tilde{Z}_{t+s}-\Phi_s(\tilde{Z}_{t})\| \right)\le -\frac{\alpha+1}{2} \ \text{a.s}.
\end{equation}
\end{theorem}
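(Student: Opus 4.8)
The plan is to read off both assertions from the integral representation \eqref{ode3} combined with Lemma \ref{noise} and a Grönwall comparison. Write, for $t\ge 0$ and $s\in[0,T]$,
\[
\tilde Z_{t+s}=\tilde Z_t+\int_0^s F(\tilde Z_{t+u})\,du+\mathcal E_t(s),\qquad
\mathcal E_t(s):=\int_{e^t-\ell_0}^{e^{t+s}-\ell_0}\frac{I[X_u]-\pi_u}{\ell_0+u}\,du,
\]
which is exactly \eqref{ode3} rewritten using $F(z)=-z+\pi(z)$ and the substitution $u\mapsto t+u$ in the first integral. By definition of the semi-flow, $\Phi_s(\tilde Z_t)=\tilde Z_t+\int_0^s F(\Phi_u(\tilde Z_t))\,du$. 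Subtracting and using that $F$ is Lipschitz on $\Delta$, with constant $L$ say, I get
\[
\|\tilde Z_{t+s}-\Phi_s(\tilde Z_t)\|\le L\int_0^s\|\tilde Z_{t+u}-\Phi_u(\tilde Z_t)\|\,du+\sup_{0\le r\le T}\|\mathcal E_t(r)\|,
\]
so Grönwall's lemma gives the deterministic bound
\[
\sup_{0\le s\le T}\|\tilde Z_{t+s}-\Phi_s(\tilde Z_t)\|\le e^{LT}\,\sup_{0\le s\le T}\|\mathcal E_t(s)\| .
\]

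Next I would identify $\sup_{0\le s\le T}\|\mathcal E_t(s)\|$ with the quantity already controlled in Lemma \ref{noise}. Setting $\tau=e^t$ and $c=e^s$, the variable $c$ runs over $[1,e^T]$ as $s$ runs over $[0,T]$, and $e^{t+s}-\ell_0=c\tau-\ell_0$, whence
\[
\sup_{0\le s\le T}\|\mathcal E_t(s)\|=\sup_{1\le c\le e^T}\Big\|\int_{\tau-\ell_0}^{c\tau-\ell_0}\frac{I[X_u]-\pi_u}{\ell_0+u}\,du\Big\| .
\]
Applying Lemma \ref{noise} with $C=e^T$ shows that the right-hand side tends to $0$ almost surely as $t\to\infty$, and combined with the Grönwall bound this proves \eqref{pseu}.

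For the exponential rate \eqref{pseu2} I would go back into the proof of Lemma \ref{noise}, in particular to the Markov-inequality estimate \eqref{Markov}. Fix $\varepsilon\in(0,\tfrac{\alpha+1}{2})$, put $\gamma=\tfrac{\alpha+1}{2}-\varepsilon$ and $C=e^T$; then \eqref{Markov} applied along $\tau=C^n$ yields
\[
\P\Big(\sup_{1\le c\le C}\Big\|\int_{C^n-\ell_0}^{cC^n-\ell_0}\frac{I[X_u]-\pi_u}{\ell_0+u}\,du\Big\|\ge C^{-n\gamma}\Big)\le k\,C^{-2n\varepsilon},
\]
which is summable in $n$. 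Borel--Cantelli then gives that almost surely this event fails for all large $n$, and the interpolation step already used at the end of the proof of Lemma \ref{noise} upgrades the geometric-subsequence bound to $\sup_{1\le c\le C}\|\int_{\tau-\ell_0}^{c\tau-\ell_0}\cdots\|\le 3C^{\gamma}\tau^{-\gamma}$ for all large $\tau$. Taking $\tau=e^t$ and inserting this into the Grönwall bound gives $\sup_{0\le s\le T}\|\tilde Z_{t+s}-\Phi_s(\tilde Z_t)\|\le 3C^{\gamma}e^{LT}e^{-\gamma t}$ eventually, hence $\limsup_{t\to\infty}\frac1t\log\big(\sup_{0\le s\le T}\|\tilde Z_{t+s}-\Phi_s(\tilde Z_t)\|\big)\le-\gamma=-\tfrac{\alpha+1}{2}+\varepsilon$; letting $\varepsilon\downarrow0$ yields \eqref{pseu2}.

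The genuinely delicate point is already isolated inside Lemma \ref{noise}: controlling $\int dM_s/((\ell_0+s)w_s)$ uniformly over the upper endpoint, via Doob's inequality together with the Poisson domination of the jump counts and the lower bound $w_s\gtrsim s^{\alpha}$. Once that estimate is in hand, everything above — the Lipschitz/Grönwall comparison, the logarithmic time change $\ell_0+t=e^u$, and the Borel--Cantelli plus interpolation bookkeeping needed to turn the probabilistic tail bound into an almost-sure rate — is routine, so I do not anticipate any further obstacle.
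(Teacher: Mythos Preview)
Your proposal is correct and follows essentially the same approach as the paper: the same Lipschitz/Gr\"onwall comparison leading to the bound \eqref{gron}, the same identification of the noise term with the quantity controlled in Lemma~\ref{noise} via the change of variables $\tau=e^t$, $C=e^T$, and the same Borel--Cantelli along $C^n$ plus interpolation to extract the almost-sure rate from \eqref{Markov}. The only cosmetic difference is that the paper carries out the second part directly in the $e^{nT}$ parametrization rather than reverting to the $\tau=C^n$ notation of Lemma~\ref{noise}, but the two are identical once one notes $C^n=e^{nT}$.
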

\begin{proof}
From the definition of $\Phi$, we have
$$\Phi_s(\tilde{Z}_{t})-\tilde{Z}_{t}=\int_{0}^{s}F(\Phi_u(\tilde{Z}_{t}))\rmd u.$$
Moreover, from \eqref{ode3}
$$\tilde{Z}_{t+s}-\tilde{Z}_{t}=\int_0^{s}F(\tilde{Z}_{t+u}) \rmd u +\int_{e^t-\ell_0}^{e^{t+s}-\ell_0} \frac{ I[X_u]-\pi_u}{\ell_0+u}\rmd u.$$
Subtracting both sides of the two above identities, we obtain that
$$\tilde{Z}_{t+s}-\Phi_s(\tilde{Z}_{t})=\int_0^{s}\left( F(\tilde{Z}_{t+u}) - F(\Phi_u(\tilde{Z}_{t}))\right) \rmd u+\int_{e^t-\ell_0}^{e^{t+s}-\ell_0} \frac{ I[X_u]-\pi_u}{\ell_0+u}\rmd u.$$
Observe that $F$ is Lipschitz, hence
$$ \| \tilde{Z}_{t+s}-\Phi_s(\tilde{Z}_{t})\| \le K \int_0^{s}\|\tilde{Z}_{t+u}-\Phi_u(\tilde{Z}_{t})\|\rmd u +\left\|\int_{e^t-\ell_0}^{e^{s+t}-\ell_0} \frac{I[X_u]-\pi_u}{\ell_0+u}\rmd u\right\|,$$
where $K$ is the Lipschitz constant of $F$. Using Gr\"onwall's inequality, we thus have 
\begin{equation}\label{gron}\| \tilde{Z}_{t+s}-\Phi_s(\tilde{Z}_{t})\|\le \sup_{0\le s \le T}\left\|\int_{e^t-\ell_0}^{e^{s+t}-\ell_0} \frac{I[X_u]-\pi_u}{\ell_0+u}\rmd u\right\| e^{K s}.\end{equation}
On the other hand, from Lemma \ref{noise}, we have
\begin{equation}\label{noise2}
\lim_{t\to\infty} \sup_{0\le s\le T} \left\|\int_{e^t-\ell_0}^{e^{s+t}-\ell_0} \frac{I[X_u]-\pi_u}{\ell_0+u}\rmd u\right\| =0. \ \ \text{a.s.}
\end{equation}
The inequality (\ref{gron}) and (\ref{noise2}) immediately imply (\ref{pseu}). 

We now prove the second part of the theorem. From (\ref{Markov}), we have
$$\P\left[\sup_{0\le s\le T}  \left\|\int_{e^t}^{e^{s+t}} \frac{I[X_u]-\pi_u}{\ell_0+u}\rmd u\right\| \ge e^{-\gamma t}\right] \le k e^{-(\alpha+1-2\gamma)t},$$
for every $0<\gamma\le\frac{\alpha+1}{2}$. By Borel-Cantelli lemma, it implies that 
$$\limsup_{n\to\infty}\frac{1}{nT}\log\left( \sup_{0\le s\le T} \left\|\int_{e^{nT}}^{e^{s+nT}} \frac{I[X_u]-\pi_u}{\ell_0+u}\rmd u\right\| \right)\le -\gamma\ \ \text{a.s}.$$
and therefore that (taking $\gamma\to \frac{\alpha+1}{2}$) 
$$\limsup_{n\to\infty}\frac{1}{nT}\log\left( \sup_{0\le s\le T} \left\|\int_{e^{nT}}^{e^{s+nT}} \frac{I[X_u]-\pi_u}{\ell_0+u}\rmd u\right\| \right)\le -\frac{\alpha+1}{2} \ \text{a.s}.$$
Note that for $nT\le t\le (n+1)T$ and $0\le s\le T$,
\begin{align*}
\left\|\int_{e^t}^{e^{s+t}} \frac{I[X_u]-\pi_u}{\ell_0+u}\rmd u\right\| & \le 2\sup_{0\le s\le T}\left\|\int_{e^{nT}}^{e^{s+nT}} \frac{I[X_u]-\pi_u}{\ell_0+u}\rmd u\right\|\\
& +\sup_{0\le s\le T}\left\|\int_{e^{(n+1)T}}^{e^{s+(n+1)T}} \frac{I[X_u]-\pi_u}{\ell_0+u}\rmd u\right\|.
\end{align*}
Therefore,
\begin{align}\label{logbound2}\limsup_{t\to\infty}\frac{1}{t}\log\left( \sup_{0\le s\le T} \left\|\int_{e^{t}}^{e^{s+t}} \frac{I[X_u]-\pi_u}{\ell_0+u}\rmd u\right\| \right)\le -\frac{\alpha+1}{2} \ \text{a.s}.
\end{align}
Finally, (\ref{pseu2}) is obtained from (\ref{gron}) and (\ref{logbound2}).
\end{proof}
\section{Convergence to equilibria}\label{sec:convergence}
Let 
$$\mathcal{C}=\{z\in \Delta : F(z)=0\}$$
stand for the \textit{equilibria set} of the vector field $F$ defined in (\ref{vecF}). We say an equilibrium $z\in \mathcal{C}$ is (linearly) \textit{stable} if all the eigenvalues of $DF(z)$, the Jacobian matrix of $F$ at $z$, have negative real parts. If there is one of its eigenvalues having a positive real part, then it is called (linearly) \textit{unstable}.   

Observe that $\mathcal{C}=\mathcal{S}\cup \mathcal U$, where we define
$$\mathcal S=\{e_1=(1,0,0,\cdots ,0), e_2=(0,1,0,\cdots ,0),\cdots ,e_d= (0,0,\cdots ,0,1)\}$$ as the set of all stable equilibria and 
$$\mathcal U=\{ z_{j_1,j_2,\cdots, j_k} : 1\le j_1<j_2<\cdots <j_k\le d, k=2,\cdots ,d\}$$ as the set of all unstable equilibria, where $z_{j_1,j_2,\cdots, j_k}$ stands for the point $z=(z_1,\cdots, z_d)\in\Delta$ such that $z_{j_1}=z_{j_2}=\cdots =z_{j_k}=\frac{1}{k}$ and all the remaining coordinates are equal to 0. 

Indeed, for each $z\in \mathcal S$, we have that $DF(z)=-I$. Moreover, $$DF\left(\frac{1}{d},\frac{1}{d},\cdots ,\frac{1}{d}\right)=(\alpha-1) I -\frac{\alpha }{d} N,$$ where 
$N$ is the matrix such that $N_{m,n}=1$ for all $m,n$ 
and $DF(z_{j_1,j_2,\cdots,j_k})= (D_{m,n})$ where $$D_{m,n}=   
\left\lbrace 
\begin{array}{ll}
(\alpha-1)- \frac{\alpha}{k} &\text{if } m=n\in\{j_i:\;i=1,\cdots,k\}; 
\\
-\frac{\alpha}{k} & \text{if } m\neq n, \hbox{ with } \{m,n\} \subset\{j_i:\;i=1,\cdots,k\};\\
-1 & \text{if } m=n\not\in\{j_i:\;i=1,\cdots,k\};\\
0 & \text{if } m\neq n, \hbox{ with } \{m,n\} \not\subset\{j_i:\;i=1,\cdots,k\}.
\end{array} \right.
$$
Therefore, we can easily compute that for each $z\in \mathcal U$, the eigenvalues of $DF(z)$ are $-1$ and $\alpha-1$, having respectively multiplicity $d-k+1$ and $k-1$. 
  
\begin{theorem}\label{thm:CVeq} $Z_t$ converges almost surely to a point in $\mathcal{C}$ as $t\to\infty$.   
\end{theorem}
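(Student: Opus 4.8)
The plan is to apply the general convergence theory for asymptotic pseudo-trajectories of semi-flows (as developed in Benaïm's work on stochastic approximation, e.g. \cite{Benaim99}). By Theorem \ref{cvthrm}, $\tilde Z$ is an asymptotic pseudo-trajectory of the semi-flow $\Phi$ generated by the vector field $F$. The standard consequence is that the limit set $L(\tilde Z)=\bigcap_{t\ge0}\overline{\{\tilde Z_s:s\ge t\}}$ is almost surely a compact connected set that is internally chain transitive (ICT) for $\Phi$. Thus the whole argument reduces to identifying the ICT sets of $\Phi$ on $\Delta$ and showing each is a single point of $\mathcal C$.

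The key structural input is that $F$ is a gradient-like vector field: I would exhibit a strict Lyapunov function. A natural candidate is something like $H(z)=\sum_i z_i^{\alpha}$ (up to normalization) or, equivalently, the relative entropy between $z$ and $\pi(z)$; one checks that $\frac{d}{dt}H(\Phi_t(z))\ge0$ with equality only on $\mathcal C$, using the explicit form \eqref{vecF} of $F$ and convexity ($\alpha>1$). Concretely, along a trajectory $\dot z_i = -z_i + z_i^{\alpha}/\sum_j z_j^{\alpha}$, and differentiating $H$ one gets a term proportional to $\sum_i z_i^{\alpha-1}(z_i^{\alpha}/\sum_j z_j^{\alpha} - z_i)$, which by the Chebyshev/rearrangement (or Cauchy–Schwarz) inequality has a definite sign and vanishes exactly when all nonzero $z_i$ are equal, i.e. on $\mathcal C=\mathcal S\cup\mathcal U$. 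Once a strict Lyapunov function exists and $\mathcal C$ (equivalently $H(\mathcal C)$) has empty interior in $\RR$ — here $\mathcal C$ is finite, so this is immediate — the classical fact (Benaïm–Hirsch) is that every ICT set is contained in $\mathcal C$; being connected and $\mathcal C$ being finite (hence totally disconnected), the ICT set is a single equilibrium. Therefore $\tilde Z_t$, and hence $Z_t$, converges a.s. to a point of $\mathcal C$.

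I would carry this out in the following order. First, record the ICT property of $L(\tilde Z)$ from Theorem \ref{cvthrm} together with the general APT theorem. Second, construct the Lyapunov function $H$ and verify $\dot H\ge 0$ along $\Phi$ with equality set exactly $\mathcal C$; this is the computational heart and the one place where $\alpha>1$ and the precise form of $\pi$ are used. Third, invoke that $\mathcal C=\mathcal S\cup\mathcal U$ is finite (as already described in the text preceding the theorem) so that $H$ is a strict Lyapunov function with $H(\mathcal C)$ nowhere dense. Fourth, apply Benaïm's characterization: a compact connected ICT set on which a strict Lyapunov function is constant and whose value set is nowhere dense must be a singleton in $\mathcal C$. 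Fifth, translate back from $\tilde Z$ to $Z$ via the time change $\ell_0+t=e^u$, which preserves limits.

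The main obstacle I anticipate is the second step: producing a \emph{strict} Lyapunov function and proving the equality case in the monotonicity inequality is exactly $\mathcal C$ and nothing more. One must be careful on the boundary faces of $\Delta$, where some $z_i=0$ and $z_i^{\alpha-1}$ or the normalizing denominator could behave delicately; the vector field $F$ is still Lipschitz up to the boundary (as noted in the text) and the faces are invariant, so an induction on the dimension of the face containing the ICT set is the clean way to handle this — on each face the same Lyapunov computation applies with $d$ replaced by the number of active coordinates. A secondary, more routine point is citing the APT-to-ICT theorem in a form valid for semi-flows on a compact space with a boundary; since $\Delta$ is compact and $\Phi$ is a genuine semi-flow, the standard statements apply verbatim.
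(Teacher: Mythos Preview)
Your proposal is correct and essentially identical to the paper's proof: the paper uses exactly $H(z)=\sum_i z_i^{\alpha}$ as a strict Lyapunov function, then cites Theorem~5.7 and Proposition~6.4 in \cite{Benaim99} to conclude that $L(\tilde Z)$ is a connected subset of $\mathcal C$, hence a point. The one small improvement in the paper is that the Lyapunov computation is done by a direct algebraic expansion yielding $\langle \nabla H(z),F(z)\rangle=\frac{\alpha}{H(z)}\sum_{i<j} z_iz_j(z_i^{\alpha-1}-z_j^{\alpha-1})^2$, a sum of nonnegative terms; this immediately identifies the equality case as $\mathcal C$ and handles boundary faces without the induction you flag as a potential obstacle.
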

\begin{proof}
Consider the map $H:\Delta\to \mathbb{R}$ such that
$$H(z)=z_1^{\alpha}+z_2^{\alpha}+\cdots +z_n^{\alpha}.$$
Note that $H$ is a strict Lyapounov function of $F$, i.e $\langle \nabla H(z), F(z)\rangle$ is positive for all $z\in \Delta\setminus \mathcal{C}$. Indeed, we have
\begin{align*} \langle \nabla H(z),F(z)\rangle  & =\displaystyle \sum_{i=1}^d \alpha z_i^{\alpha-1} \left(-z_i +\frac{z_i^{\alpha}}{\sum_{j=1}^d  z_j^{\alpha}} \right)\\
&=\alpha    \left(-\sum_{i=1}^d z_i^{\alpha} +\frac{\sum_{i=1}^d z_i^{2\alpha-1}}{\sum_{i=1}^d  z_i^{\alpha}} \right)\\
\ & \displaystyle  =\frac{\alpha}{H(z)}    \left(-\left( \sum_{i=1}^d z_i^{\alpha}\right)^2 +\sum_{i=1}^d z_i^{2\alpha-1}\sum_{i=1}^dz_i \right)\\
\ & \displaystyle =\frac{\alpha}{H(z)} \sum_{1\le i<j\le d} z_iz_j \left( z_i^{\alpha-1}-z_j^{\alpha-1}\right)^2.
\end{align*}
For $z\in \Delta\setminus \mathcal{C}$, there exist distinct indexes $j_1, j_2\in \{1,2,...,d\}$ such that $z_{j_1}, z_{j_2}$ are positive and $z_{j_1}\neq z_{j_2}$. Therefore,
 $$\langle \nabla H(z),F(z)\rangle \ge \frac{\alpha}{H(z)}  z_{j_1}z_{j_2} \left( z_{j_1}^{\alpha-1}-z_{j_2}^{\alpha-1}\right)^2>0.$$
 
Let $$L(Z)=\bigcap_{t\ge0}\overline{Z([t,\infty))}$$
be limit set of $Z$. Since $\tilde{Z}$ is  an asymptotic pseudo-trajectory of $\Phi$, by Theorem 5.7 and Proposition 6.4 in \cite{Benaim99}, we can conclude that  $L(Z)=L(\tilde{Z})$ is a connected subset of $\mathcal{C}$. Moreover, $\mathcal{C}$ is actually an isolated set and this fact implies the almost sure convergence of ${Z_t}$ toward an equilibrium $z\in\mathcal{C}$ as $t\to\infty$.
\end{proof}

\begin{lemma} \label{cvrate}  Let $z^*$ be a stable equilibrium. Then for each small $\epsilon>0$ there exists $\delta_{\epsilon}>0$ such that $z^*$ attracts exponentially $B_{\delta_{\epsilon}}(z^*):=\left\{z\in\Delta : \|z-z^*\|<\delta_{\epsilon} \right\}$ at rate $-1+\epsilon$, i.e.
$$\|\Phi_s(z)-z^*\|\le e^{-(1-\epsilon)s}\|z-z^*\|$$
for all $s>0$ and $z\in B_{\delta_{\epsilon}}(z^*)$.
\end{lemma}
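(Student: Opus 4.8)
The plan is to reduce the statement to the classical linearisation estimate near a linearly stable equilibrium, exploiting the computation made just before the statement that $DF(z^*)=-I$ for every stable equilibrium $z^*\in\mathcal S$. Since the linearised semi-flow near $z^*$ is $y\mapsto e^{-s}y$, all eigenvalues of $DF(z^*)$ equal $-1$, and the only real work is to show that the nonlinear remainder can be absorbed into an arbitrarily small loss $\epsilon$ in the rate.

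First I would note that $F$ is $C^1$ in a neighbourhood of $z^*$ inside $\Delta$: each coordinate of $\pi$ is a ratio of the functions $z\mapsto z_j^{\alpha}$, which are $C^1$ on $[0,\infty)$ because $\alpha>1$, divided by the denominator $z_1^{\alpha}+\cdots+z_d^{\alpha}$, which is strictly positive and hence bounded away from $0$ near $z^*=e_i$. Consequently, writing $u=z-z^*\in T\Delta$, Taylor's formula gives
$$F(z)=DF(z^*)u+R(z)=-(z-z^*)+R(z),\qquad \|R(z)\|=o(\|z-z^*\|)\ \text{ as }z\to z^*\text{ in }\Delta.$$
Hence, given $\epsilon\in(0,1)$, I can choose $\delta_\epsilon>0$ so small that $\|R(z)\|\le\epsilon\|z-z^*\|$ for every $z\in\Delta$ with $\|z-z^*\|<\delta_\epsilon$.

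Next, fix $z\in B_{\delta_\epsilon}(z^*)$ and put $u(s)=\Phi_s(z)-z^*$ and $\phi(s)=\|u(s)\|^2$; since $F$ is $C^1$ near $z^*$ the curve $s\mapsto\Phi_s(z)$ is $C^1$ there, and $\dot u(s)=F(\Phi_s(z))=-u(s)+R(\Phi_s(z))$. As long as $\Phi_s(z)\in B_{\delta_\epsilon}(z^*)$ we then get
$$\phi'(s)=2\langle u(s),\dot u(s)\rangle=-2\|u(s)\|^2+2\langle u(s),R(\Phi_s(z))\rangle\le -2(1-\epsilon)\|u(s)\|^2=-2(1-\epsilon)\phi(s).$$
In particular $\phi$ is non-increasing on any sub-interval of $[0,\infty)$ on which the trajectory stays in the ball, so a standard continuity argument shows it never leaves $B_{\delta_\epsilon}(z^*)$: if $s_0$ were the first exit time, then $\phi(s_0)=\delta_\epsilon^2>\phi(0)$ would contradict the monotonicity of $\phi$ on $[0,s_0)$ (note that $\Phi_s(z)$ always remains in $\Delta$, so $R$ is controlled along the whole trajectory). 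Therefore the differential inequality holds for all $s\ge0$, and Grönwall's lemma yields $\phi(s)\le e^{-2(1-\epsilon)s}\phi(0)$, that is,
$$\|\Phi_s(z)-z^*\|\le e^{-(1-\epsilon)s}\|z-z^*\|\qquad\text{for all }s>0,$$
which is exactly the claim.

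The only genuinely delicate point is the behaviour of $F$ at the corner $z^*=e_i$ of the simplex — its $C^1$ regularity and the identity $DF(e_i)=-I$ — but this is precisely what has been recorded in the paragraph preceding the lemma, and the estimate on $R$ is merely the definition of differentiability; everything else is the textbook Lyapunov/Grönwall argument for a linear part that contracts at rate $-1$.
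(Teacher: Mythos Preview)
Your proof is correct. Both you and the paper carry out the standard linearisation argument, using $DF(z^*)=-I$ and controlling the nonlinear remainder; the difference is purely in the packaging. The paper writes the variation-of-constants formula $z(t)-z^*=e^{-t}(z(0)-z^*)+\int_0^t e^{-(t-s)}R(z(s)-z^*)\,ds$, invokes a H\"older bound $\|R(y)\|\le k\|y\|^{1+\beta}$ with $\beta=\min(1,\alpha-1)$, and applies Gr\"onwall to $e^t\|z(t)-z^*\|$. You instead differentiate the Lyapunov function $\phi(s)=\|\Phi_s(z)-z^*\|^2$ directly and only need the weaker remainder estimate $\|R(z)\|=o(\|z-z^*\|)$ coming from bare differentiability. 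Your route is slightly more economical (it avoids the Duhamel integral and the explicit H\"older exponent), while the paper's formulation makes the quantitative dependence of $\delta_\epsilon$ on $\epsilon$ more transparent; either way the bootstrap showing the trajectory never leaves $B_{\delta_\epsilon}(z^*)$ is the same.
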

\begin{proof}
We observe that $$F(z)=(z-z^*).DF(z^*)^T+R(z-z^*),$$
where we have set $$R(y)=y.\left(\int_0^1DF(ty+z^*)^T\rmd t-DF(z^*)^T\right).$$ Note that $\|R(y)\| \le k \|y\|^{1+\beta},$ where $\beta=\min(1,\alpha-1)$ and $k$ is some positive constant. Therefore, we can transform the differential equation \eqref{odeF} to the following integral form
$$z(t)-z^*= (z(0)-z^*)e^{t DF(z^*)^T}+\int_0^t R(z(s)-z^*)e^{(t-s)DF(z^*)^T}\rmd s. $$
Note that for $z^*\in \mathcal{S}$, we have $DF(z^*)=-I$. Therefore,
$$\|z(t)-z^*\|\le e^{-t}\| z(0)-z^*\|+\int_0^t e^{-(t-s)}\|R(z(s)-z^*)\|\rmd s.$$
For each small $\epsilon>0$, if $\|z(s)-z^*\|\le\left(\frac{\epsilon}{k}\right)^{1/\beta}$ for all $0\le s\le t$, then
$$e^{t}\|z(t)-z^*\|\le \|z(0)-z^*\|+\epsilon\int_0^t  e^{s}\|z(s)-z^*\|\rmd s. $$
Thus, by Gronwall inequality, if $\|z(s)-z^*\|\le\left(\frac{\epsilon}{k}\right)^{1/\beta}$ for all $0\le s\le t$, then
$$\|z(t)-z^*\| \le \| z(0)-z^*\| e^{-(1-\epsilon)t}.$$
But this also implies that if $\|z(0)-z^*\|\le\left(\frac{\epsilon}{k}\right)^{1/\beta}$ then $\|z(t)-z^*\|\le \left(\frac{\epsilon}{k}\right)^{1/\beta}$ for all $t\ge 0$. Hence, for all $t\ge 0$ and any small $\epsilon>0$ and $z(0)$ such that $\|z(0)-z^*\|\le \left(\frac{\epsilon}{k}\right)^{1/\beta}$, we have 
\begin{equation*}\|z(t)-z^*\|\le  e^{-(1-\epsilon) t}\|z(0)-z^*\|.\end{equation*}
\end{proof}

\begin{lemma}\label{lem:CVspeed}
Let $z^*=e_j$ be a stable equilibrium, with $j\in V$. Then, a.s. on the event  $\{Z_t\to z^*\}$, for all $\epsilon>0$,
$$\sum_{i\ne j} L(i,t)=o(t^{\epsilon}).$$
\end{lemma}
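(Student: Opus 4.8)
The plan is to combine the exponential attraction rate from Lemma~\ref{cvrate} with the sharp ($-\tfrac{\alpha+1}{2}$)-asymptotic pseudo-trajectory property established in Theorem~\ref{cvthrm}, and then to transfer the resulting exponential decay estimate for $\tilde Z_u - z^*$ back to a statement about the unnormalized local times $L(i,t)$. Concretely, fix $\epsilon>0$ and work on the event $\{Z_t\to z^*\}$, i.e. $\{\tilde Z_u\to z^*\}$. First I would fix a small $\eta>0$ (to be chosen at the end depending on $\epsilon$) and pick $\delta_\eta>0$ as in Lemma~\ref{cvrate}, so that $z^*$ attracts $B_{\delta_\eta}(z^*)$ at rate $-(1-\eta)$. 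Since $\tilde Z_u\to z^*$, there is a (random) time $u_0$ such that $\tilde Z_u\in B_{\delta_\eta}(z^*)$ for all $u\ge u_0$.

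The core estimate is a discrete Gronwall / comparison argument along the flow: for any $u\ge u_0$ and any step $T>0$,
$$\|\tilde Z_{u+T}-z^*\| \le \|\Phi_T(\tilde Z_u)-z^*\| + \|\tilde Z_{u+T}-\Phi_T(\tilde Z_u)\| \le e^{-(1-\eta)T}\|\tilde Z_u-z^*\| + r_u,$$
where $r_u:=\sup_{0\le s\le T}\|\tilde Z_{u+s}-\Phi_s(\tilde Z_u)\|$ satisfies, by \eqref{pseu2}, $\limsup_{u\to\infty}\tfrac1u\log r_u\le -\tfrac{\alpha+1}{2}$. Iterating this over $u=u_0, u_0+T, u_0+2T,\dots$ and summing the geometric-type series, one gets that for $u$ large,
$$\|\tilde Z_u-z^*\| \le C\,e^{-(1-\eta)(u-u_0)} + C\max_{u_0\le v\le u}\big(e^{-(1-\eta)(u-v)} r_v\big),$$
and since $r_v$ decays like $e^{-(\tfrac{\alpha+1}{2}-\eta')v}$ while $\alpha>1$ gives $\tfrac{\alpha+1}{2}>1$, the second term is dominated by the first for small enough $\eta$. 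Hence on $\{Z_t\to z^*\}$, for every $\rho<1$ there is $C_\rho$ with $\|\tilde Z_u-z^*\|\le C_\rho e^{-\rho u}$ for all $u\ge 0$; in particular, unwinding $u=\log(\ell_0+t)$, $\|Z_t-z^*\|\le C_\rho(\ell_0+t)^{-\rho}$.

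Finally I would translate this into the claimed bound on $\sum_{i\ne j}L(i,t)$. By definition $Z_t$ has $i$-th coordinate $L(i,t)/(\ell_0+t)$, and $z^*=e_j$ has all coordinates $0$ except the $j$-th, so $\|Z_t-z^*\|\ge \sum_{i\ne j}\tfrac{L(i,t)}{\ell_0+t}\cdot(\text{const})$ (any norm bound works up to a dimensional constant). Therefore $\sum_{i\ne j}L(i,t)\le C'_\rho(\ell_0+t)^{1-\rho}$, and choosing $\rho$ close enough to $1$ that $1-\rho<\epsilon$ yields $\sum_{i\ne j}L(i,t)=o(t^\epsilon)$, as desired.

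I expect the main obstacle to be the iteration/bookkeeping in the middle step: one must be careful that the constants $C$ in the discrete Gronwall recursion do not accumulate over the (growing number of) steps, which is why it is essential to use the \emph{strict} exponential contraction rate $-(1-\eta)$ with $\eta$ chosen small \emph{relative to} the gap between $1$ and $\tfrac{\alpha+1}{2}$, rather than, say, rate $-1$ exactly. The randomness of $u_0$ is harmless since all statements are almost sure and asymptotic in $t$. A minor point to handle cleanly is that $\tilde Z_u$ is only defined for $u>0$ and the flow $\Phi$ acts on all of $\Delta$, so the comparison $\|\Phi_T(\tilde Z_u)-z^*\|\le e^{-(1-\eta)T}\|\tilde Z_u-z^*\|$ is legitimate exactly because $\tilde Z_u\in B_{\delta_\eta}(z^*)$ and Lemma~\ref{cvrate} applies on that ball.
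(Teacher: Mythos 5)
Your proof is correct and follows essentially the same route as the paper: combine the exponential attraction from Lemma \ref{cvrate} with the $-\frac{\alpha+1}{2}$-pseudo-trajectory property from Theorem \ref{cvthrm} to get $\|\tilde Z_u-z^*\|=O(e^{-(1-\eta)u})$ on the convergence event, then translate back to the local times. The only difference is that the paper delegates your explicit discrete Gronwall iteration to Lemma 8.7 of \cite{Benaim99}, whereas you carry it out by hand (correctly — the separation $\frac{\alpha+1}{2}>1>1-\eta$ is exactly what makes the noise term subdominant).
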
 

\begin{proof}
Let us fix $\epsilon>0$ and let $\delta_{\epsilon}$ be the constant defined in Lemma \ref{cvrate}. Note that on the event $\Gamma(z^*):=\{Z_t\to z^*\}$, there exists $T_{\epsilon}>0$ such that $\tilde{Z}_t\in B_{\delta_{\epsilon}}$ for all $t\ge T_{\epsilon}$.
Combining the results in Theorem \ref{cvthrm} with Lemma \ref{cvrate} and using Lemma 8.7 in \cite{Benaim99}, we have  a.s. on $\Gamma(z^*),$
$$\limsup_{t\to\infty} \frac{1}{t}\log\|\tilde{Z}_{t}-z^*\|  \le -1+\epsilon$$
for arbitrary $\epsilon>0$. This implies that  a.s. on $\Gamma(z^*),$ that $\| {Z}_{t}- z^*\|=o(t^{-(1-\epsilon)})$. And the lemma easily follows.
\end{proof}

\begin{lemma}\label{boundlm} 
Let $j\in V$, $\epsilon\in (0,1-1/\alpha)$ and $C$ a finite constant. Set $$A_{j,C,\epsilon}:=\left\{\sum_{i\ne j} L(i,t)\le C t^\epsilon,\;\forall t\ge 1\right\}.$$ 
Then
$\E[\sum_{i\neq j} L(i,\infty) 1_{A_{j,C,\epsilon}}] < \infty$.
\end{lemma}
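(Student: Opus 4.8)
The plan is to estimate the expected total number of jumps of $X$ in $[1,\infty)$ on the event $A_{j,C,\epsilon}$, and then bound the extra local time accumulated away from $j$ by controlling the time spent at the ``wrong'' vertices between consecutive returns to $j$. First I would observe that on $A_{j,C,\epsilon}$ the local time at each vertex $i \neq j$ is bounded by $Ct^{\epsilon}$, so the weight $w^{(i)}_t = L(i,t)^{\alpha} \le C^{\alpha} t^{\alpha\epsilon}$. Meanwhile $L(j,t) = \ell_0 + t - \sum_{i\ne j}L(i,t) \ge \ell_0 + t - Ct^{\epsilon}$, which for large $t$ is of order $t$, so $w^{(j)}_t$ is of order $t^{\alpha}$. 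Since $G$ is complete, whenever $X_s = i \neq j$ the rate of jumping to $j$ is $w^{(j)}_s \asymp s^{\alpha}$, while the total rate of jumping anywhere is $w_s - w^{(i)}_s \le w_s \le d\, C^{\alpha} t^{\alpha\epsilon} + (\ell_0+t)^{\alpha}$, dominated by the $t^\alpha$ term. Hence, conditionally, each jump out of a vertex $i \neq j$ lands on $j$ with probability bounded below by $1 - k t^{\alpha\epsilon - \alpha}$, i.e.\ tending to $1$; and the expected holding time at $i \neq j$ is at most of order $s^{-\alpha}$.

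The key step is to set up a renewal-type / stopping-time decomposition. Let $0 \le \tau_1 < \tau_2 < \cdots$ be the successive times at which $X$ jumps away from $j$ (into some $i\ne j$), and let $\sigma_n$ be the first time after $\tau_n$ that $X$ returns to $j$. The total local time accumulated away from $j$ after time $1$ is $\sum_n (\sigma_n - \tau_n)\mathbf 1_{\tau_n \ge 1}$ (plus a bounded initial contribution). On $A_{j,C,\epsilon}$, during any excursion starting at $\tau_n \approx s$, the process sits at a sequence of non-$j$ vertices with i.i.d.-like geometric number of steps (success probability $\to 1$) and exponential-type holding times of mean $O(s^{-\alpha})$; so $\E[\sigma_n - \tau_n \mid \mathcal F_{\tau_n}]\mathbf 1_{A_{j,C,\epsilon}}$ is $O(\tau_n^{-\alpha})$. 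To control $\sum_n \tau_n^{-\alpha}$ I would bound the number of excursions that have started by time $t$: since each time $X$ is at $j$ it jumps away at rate $w_t - w^{(j)}_t = O(t^{\alpha\epsilon})$, the expected number of departures from $j$ up to time $t$ on $A_{j,C,\epsilon}$ is $O(t^{\alpha\epsilon+1})$ (by dominating by a Poisson process of intensity $k t^{\alpha\epsilon}$, as in the proof of Lemma~\ref{noise}). Combining: writing $N_t$ for the number of $\tau_n \le t$, we have $\E[N_t \mathbf 1_{A_{j,C,\epsilon}}] \le k t^{\alpha\epsilon+1}$, so
$$
\E\Big[\sum_{i\ne j}L(i,\infty)\,\mathbf 1_{A_{j,C,\epsilon}}\Big]
\le k + \E\Big[\sum_{n:\tau_n\ge 1}\tau_n^{-\alpha}\mathbf 1_{A_{j,C,\epsilon}}\Big]
\le k + k\int_1^\infty t^{-\alpha}\, d\,\E[N_t\mathbf 1_{A_{j,C,\epsilon}}],
$$
and integrating by parts this is finite provided $\alpha\epsilon + 1 - \alpha < 0$, i.e.\ $\epsilon < 1 - 1/\alpha$, which is exactly the hypothesis.

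The main obstacle I anticipate is making the excursion estimate rigorous: the successive holding times and jump directions within an excursion are not literally independent, and the weights $w^{(i)}_s$ change (increase) as the excursion proceeds, so one must argue that on $A_{j,C,\epsilon}$ the relevant rates stay in the claimed ranges uniformly over the excursion, and dominate the excursion length by a genuinely tractable object --- e.g.\ a geometric sum of exponentials with the worst-case parameters, or a comparison with an auxiliary Markov chain. One has to be a little careful that $A_{j,C,\epsilon}$ is not $\mathcal F_{\tau_n}$-measurable, so the conditioning should be done by intersecting with the $\mathcal F_{\tau_n}$-measurable event $\{\sum_{i\ne j}L(i,\tau_n)\le C\tau_n^\epsilon\}$ and using that $A_{j,C,\epsilon}$ is contained in it; the extra local time picked up during an excursion is itself $O(s^{-\alpha})$, which is negligible, so the constraint is essentially preserved along the excursion and this causes no circularity. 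Apart from that, everything reduces to the same Poisson-domination and Borel--Cantelli-free first-moment bookkeeping already used for Lemma~\ref{noise}.
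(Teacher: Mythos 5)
Your proposal is correct and is essentially the paper's own argument: both rest on dominating the number of departures from $j$ by a Poisson process of rate $O(t^{\alpha\epsilon})$ while the defining constraint of $A_{j,C,\epsilon}$ still holds (handled via a stopping time / $\mathcal F_{\tau_n}$-measurable events, exactly the adaptedness fix you describe), dominating each excursion's duration by a single exponential of rate $L(j,\cdot)^\alpha\asymp t^\alpha$ (which makes your worries about dependence within an excursion moot), and checking that the resulting sum converges precisely when $\epsilon<1-1/\alpha$. The only difference is bookkeeping: the paper indexes excursion blocks by the integer level $n$ of $L(j,\cdot)$, so each block contributes a $\mathrm{Poisson}(O(n^{\alpha\epsilon}))$ number of excursions of expected length $O(n^{-\alpha})$ and one sums $O(n^{\alpha\epsilon-\alpha})$ over $n$, whereas you index by real time and integrate; the two computations are the same.
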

\begin{proof}

For each $n\ge 1$, set $\tau_n:=\inf\{t\ge 1:\, L(j,t)= n\}$ and $\gamma_n=\sum_{i\in V\setminus\{j\}} L(i,\tau_n)$.  Set also $\tau:=\inf\{t\ge 1:\; \sum_{i\ne j} L(i,t)> C t^\epsilon\}$, $\tau'_n=\tau_n\wedge \tau$ and $\gamma'_n=\sum_{i\in V\setminus\{j\}} L(i,\tau'_n)$. Note that $A_{j,C,\epsilon}=\{\tau=\infty\}$ and on $A_{j,C,\epsilon}$, $\tau_n=\tau'_n<\infty$  and $\gamma_n=\gamma'_n$ for all $n\ge 1$.

During the time interval $[\tau'_n,\tau'_{n+1}]$, the jumping rate to $j$ is larger than $\rho_0= n^{\alpha}$ and the jumping rate from $j$ is smaller than $\rho_1=(C (n+1)^{\epsilon})^{\alpha}$.
This implies that on the time interval $[\tau'_n,\tau'_{n+1}]$, the number of jumps from $j$ to $V\setminus\{j\}$ is stochastically dominated by the number of jumps of a Poisson process with intensity $\rho_1$. Since the time spent at $j$ during $[\tau'_n,\tau'_{n+1}]$ is $L(j,\tau'_{n+1})-L(j,\tau'_n)\le 1$, the number of jumps from $j$ is stochastically dominated by a random variable $N\sim \text{Poisson}(\rho_1)$. Therefore, $\gamma'_{n+1}-\gamma'_n$, the time spent at $V\setminus\{j\}$ during $[\tau'_n,\tau'_{n+1}]$, is stochastically dominated by 
$T:=\sum_{i=1}^N \xi_{i}$, where $\xi_i,i=1,2,...,N$ are independent and exponentially distributed random variables with mean value $1/\rho_0.$
Therefore,
$$\E[\gamma'_{n+1}-\gamma'_n]\le \frac{\rho_1}{\rho_0}=\frac{C^\alpha(n+1)^{\alpha\epsilon}}{n^\alpha}=O\left(\frac{1}{n^{\alpha(1-\epsilon)}}\right).$$
Since $\lim_{n\to\infty}\gamma'_n=\sum_{i\ne j}L(i,\tau)$, this proves that
$\E\left[\sum_{i\ne j}L(i,\tau)\right]<\infty.$
This proves the lemma since $\sum_{i\neq j} L(i,\infty) 1_{A_{j,C,\epsilon}} \le \sum_{i\ne j}L(i,\tau)$.
\end{proof}

\begin{theorem}\label{thm:localization}
Let $z^*=e_j\in \mathcal{S}$ be a stable equilibrium, with $j\in \{1,2,...,d\}$. Then, a.s. on the event  $\{Z_t\to z^*\}$, 
$$L(j,\infty)=\infty \quad\text{ and }\quad \sum_{i\ne j} L(i,\infty)<\infty.$$
\end{theorem}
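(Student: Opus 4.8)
The plan is to combine Lemma~\ref{lem:CVspeed} and Lemma~\ref{boundlm} directly, with essentially no new computation. Fix once and for all some $\epsilon\in(0,1-1/\alpha)$; this interval is nonempty since $\alpha>1$. Write $\Gamma:=\{Z_t\to z^*\}$.

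First I would use Lemma~\ref{lem:CVspeed} to reduce the "soft" decay statement on $\Gamma$ to membership in one of the "hard" events of Lemma~\ref{boundlm}. On $\Gamma$, Lemma~\ref{lem:CVspeed} gives $\sum_{i\ne j}L(i,t)=o(t^{\epsilon})$; in particular, for almost every $\omega\in\Gamma$ the quantity $\sup_{t\ge 1}t^{-\epsilon}\sum_{i\ne j}L(i,t)$ is finite, so $\omega\in A_{j,C,\epsilon}$ for some integer $C$. Hence, up to a null set, $\Gamma\subseteq\bigcup_{C\in\NN}A_{j,C,\epsilon}$.

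Next, by Lemma~\ref{boundlm}, for each $C$ we have $\E\big[\sum_{i\ne j}L(i,\infty)\,\mathbf 1_{A_{j,C,\epsilon}}\big]<\infty$, so $\sum_{i\ne j}L(i,\infty)<\infty$ almost surely on $A_{j,C,\epsilon}$. Taking the countable union over $C\in\NN$ yields $\sum_{i\ne j}L(i,\infty)<\infty$ a.s. on $\Gamma$, which is the second assertion. For the first assertion, recall the identity $L(j,t)+\sum_{i\ne j}L(i,t)=\ell_0+t$: letting $t\to\infty$ on $\Gamma$, the left-hand sum converges to a finite limit by what we just proved while $\ell_0+t\to\infty$, whence $L(j,t)\to\infty$, i.e. $L(j,\infty)=\infty$ a.s. on $\Gamma$.

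The genuine content has already been carried out in Lemmas~\ref{cvrate}--\ref{boundlm}, so there is no real obstacle at this stage; the only point needing a moment's care is the passage from the path-dependent bound $o(t^{\epsilon})$ valid on $\Gamma$ to the deterministic-constant events $A_{j,C,\epsilon}$, which is exactly the elementary remark that an $o(t^{\epsilon})$ trajectory is dominated by $Ct^{\epsilon}$ for a (path-dependent) finite $C$, together with the countability of $\NN$.
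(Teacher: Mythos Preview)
Your proof is correct and follows essentially the same route as the paper's own proof: fix $\epsilon\in(0,1-1/\alpha)$, use Lemma~\ref{lem:CVspeed} to cover $\{Z_t\to z^*\}$ (up to a null set) by the countable union $\bigcup_{C\in\NN} A_{j,C,\epsilon}$, then invoke Lemma~\ref{boundlm} on each piece. You are slightly more explicit than the paper in spelling out the countability step and in deriving $L(j,\infty)=\infty$ from the identity $\sum_i L(i,t)=\ell_0+t$, but the argument is the same.
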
 

\begin{proof}
Lemma \ref{lem:CVspeed} implies that for $\epsilon\in (0,1-\frac{1}{\alpha})$, the event $\{Z_t\to z^*\}$ coincides a.s. with $\cup_{C} A_{j,C,\epsilon}$. Lemma \ref{boundlm} states that for all $C>0$, a.s. on $A_{j,C,\epsilon}$, $\sum_{i\neq j} L(i,\infty)<\infty$. Therefore, we have that a.s. on $\{Z_t\to z^*\}$, $\sum_{i\neq j} L(i,\infty)<\infty$.
\end{proof}

We will show in the next section that if $z^*$ is an unstable equilibrium, then $\P(Z_t\to z^*)=0$ and therefore
this will finish the proof of Theorem \ref{thm:mainresult}.

\section{Non convergence to unstable equilibria}\label{sec:nonCV}

In this section, we prove a general non convergence theorem for a class of finite variation c\`adl\`ag processes. 
The proof of this theorem follows ideas from the proof of a theorem of Brandi\`ere and Duflo (see \cite{Brandiere96} or \cite{Duflo1996}), but using a new idea presented in Section \ref{sec:dirattract}, where sufficient conditions are given for an asymptotic pseudo-trajectory $Z$ of a dynamical system to be attracted exponentially fast towards the unstable manifold of an equilibrium $z^*$ on the event $Z_t$ converges towards $z^*$.
Then, in Section \ref{sec:dirinst}, we prove a non convergence theorem towards an unstable equilibrium that has no stable direction. The proof essentially follows \cite{Brandiere96} and \cite{Duflo1996}. 
We also point out in Remark \ref{rk:inaccuracy} several inaccuracies in their proof. 

The results proved in Sections \ref{sec:dirattract} and \ref{sec:dirinst} are then applied in Section \ref{sec:nonCV_VRJP} to strongly VRJP, showing in particular that the occupation measure process does not converge towards unstable equilibria with probability 1.
\subsection{Attraction towards the unstable manifold}\label{sec:dirattract}

In this section, we fix $m\in\{1,2,\dots d\}$, a point $z\in\mathbb{R}^d$ will be written as $z=(x,y)$ where $x\in \mathbb R^m$ and $y\in \mathbb R^{d-m}$.
Let $\Pi:\mathbb{R}^d\to\mathbb{R}^m$ be defined by $\Pi(x,y)=x$ (since $\Pi$ is linear, we will often write $\Pi z$ instead of $\Pi(z)$).

We let $F:\mathbb{R}^d\to\mathbb{R}^d$ be a $C^1$ Lipschitz vector field. 

Let us consider a finite variation c\`adl\`ag process $Z=(X,Y)$ in $\mathbb{R}^d$, adapted to a filtration $(\mathcal{F}_t)_{t\ge 0}$, satisfying the following equation 
\begin{align*}Z_t-Z_s=\int_s^t F(Z_u)\rmd u+\int_s^t {\Psi}_u\rmd u + M_t-M_s\end{align*}
where $M_t$ is a finite variation c\`adl\`ag martingale w.r.t $(\mathcal{F}_t)$ and $\Psi_t$ is a $(\mathcal{F}_t)$-adapted process.

Let $z^*=(x^*,y^*)$ be an equilibrium of $F$, i.e. $F(z^*)=0$. In the following, $\Gamma$ denotes the event $\{\lim_{t\to\infty} Z_t = z^*\}$.
 
\begin{hypothesis}\label{hyp:gpt}
There is $\gamma>0$ such that for all $T>0$, there exists a finite constant $C(T)$, such that for all $t>0$, $$\E\left[\sup_{0\le h\le T} \left\|\int_{t}^{t+h} (\Psi_u\rmd u +\rmd M_u)\right\|^2\right]\le C(T) e^{-2\gamma t}.$$
\end{hypothesis}
\begin{remark}
Using Doob's inequality, Hypothesis \ref{hyp:gpt} is satisfied as soon as there is a constant $C>0$ such that for all $t>0$,  $\|\Psi_t\|\le C e^{-\gamma t}$ and for all $t>s>0$ and all $1\le i\le d$, $\langle M^i\rangle_t-\langle M^i\rangle_s\le Ce^{-2\gamma s}$.
\end{remark}

\begin{lemma}\label{lem:gpt}
If Hypothesis \ref{hyp:gpt} holds, then
$Z$ is a $\gamma$-pseudotrajectory of $\Phi$, the flow generated by $F$, i.e. a.s. for all $T>0$
$$\limsup_{t\to\infty}\frac{1}{t}\log\left(\sup_{0\le h\le T} \|Z_{t+h}-\Phi_h(Z_t)\|\right)\le -\gamma.$$ 
\end{lemma}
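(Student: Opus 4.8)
The plan is to mirror the proof of Theorem~\ref{cvthrm}, with Grönwall's inequality replacing the role it played there and Hypothesis~\ref{hyp:gpt} (instead of Lemma~\ref{noise}) supplying the exponentially small noise. First I would subtract the integral equation for $Z$ from the defining relation $\Phi_h(Z_t)-Z_t=\int_0^h F(\Phi_u(Z_t))\,du$ to get
$$Z_{t+h}-\Phi_h(Z_t)=\int_t^{t+h}\bigl(F(Z_u)-F(\Phi_{u-t}(Z_t))\bigr)\,du+\int_t^{t+h}(\Psi_u\,du+dM_u).$$
Writing $K$ for the Lipschitz constant of $F$ and $N(t,T):=\sup_{0\le h\le T}\bigl\|\int_t^{t+h}(\Psi_u\,du+dM_u)\bigr\|$, the Lipschitz bound gives $\|Z_{t+h}-\Phi_h(Z_t)\|\le K\int_0^h\|Z_{t+u}-\Phi_u(Z_t)\|\,du+N(t,T)$ for $0\le h\le T$, and Grönwall yields the deterministic estimate
$$\sup_{0\le h\le T}\|Z_{t+h}-\Phi_h(Z_t)\|\le e^{KT}\,N(t,T).$$
So it suffices to show that $\limsup_{t\to\infty}\tfrac1t\log N(t,T)\le-\gamma$ almost surely, for every $T>0$.

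Second, I would use Hypothesis~\ref{hyp:gpt}, which says precisely that $\E[N(t,T)^2]\le C(T)e^{-2\gamma t}$. Fixing $\gamma'<\gamma$ and applying Markov's inequality along integer times, $\P\bigl(N(n,T)\ge e^{-\gamma' n}\bigr)\le C(T)e^{-2(\gamma-\gamma')n}$, which is summable in $n\in\mathbb{N}$. The Borel--Cantelli lemma then gives $N(n,T)<e^{-\gamma' n}$ for all large $n$ almost surely, hence $\limsup_{n\to\infty}\tfrac1n\log N(n,T)\le-\gamma'$.

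Third, I would pass from integer times to all real $t$ exactly as in Lemma~\ref{noise} and Theorem~\ref{cvthrm}: for $n\le t\le n+1$ and $0\le h\le T$, split $\int_t^{t+h}=\int_n^{t+h}-\int_n^t$; since $t+h-n\le T+1$ and $t-n\le1\le T+1$, this gives $N(t,T)\le 2N(n,T+1)$, whence $\limsup_{t\to\infty}\tfrac1t\log N(t,T)\le\limsup_{n\to\infty}\tfrac1n\log N(n,T+1)\le-\gamma'$. Letting $\gamma'\uparrow\gamma$ along a sequence, and intersecting the resulting almost-sure events over $T\in\mathbb{N}$ (noting that the bound for a given $T$ automatically implies it for all smaller windows), yields the conclusion simultaneously for all $T>0$; combining with the Grönwall bound above finishes the proof.

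I do not expect a genuine obstacle here: the argument is a routine Grönwall-plus-Borel--Cantelli adaptation of the estimates already carried out for $\tilde Z$ in Section~\ref{sec:Dyn}. The only step needing a little care is the discretization: one must check that replacing the continuum of starting times by the sequence $\{n\}$ does not degrade the exponential rate, which the slightly enlarged window $T+1$ takes care of cleanly. (One could alternatively remark that, via Doob's inequality as in the Remark following Hypothesis~\ref{hyp:gpt}, the hypothesis is implied by the simpler pointwise bounds $\|\Psi_t\|\le Ce^{-\gamma t}$ and $\langle M^i\rangle_t-\langle M^i\rangle_s\le Ce^{-2\gamma s}$, but this is not needed for the lemma itself.)
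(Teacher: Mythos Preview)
Your proposal is correct and is precisely the standard Gr\"onwall-plus-Borel--Cantelli argument that the paper invokes by reference: the paper's own proof consists solely of the sentence ``Follow the proof of Proposition 8.3 in \cite{Benaim99}.'' What you have written is exactly that proof spelled out, and it also mirrors the argument the authors give in full for Theorem~\ref{cvthrm}, so there is no difference in approach to discuss.
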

\begin{proof}
Follow the proof of Proposition 8.3 in \cite{Benaim99}.
\end{proof}

\begin{hypothesis}
\label{hyp:dirattract}
There are $\mu>0$ and $\mathcal{N}=\mathcal{N}_1\times \mathcal{N}_2$ a compact convex neighbourhood of $z^*$ (with $\mathcal{N}_1$ and $\mathcal{N}_2$ respectively neighbourhoods of $x^*\in\mathbb{R}^m$ and of $y^*\in \mathbb{R}^{d-m}$) such that 
$K:=\{z=(x,y)\in \overline{\mathcal{N}}:y=y^*\}$ attracts exponentially $\overline{\mathcal{N}}$ at rate $-\mu$ (i.e. there is a constant $C$ such that $d(\Phi_t(z),K)\le C e^{-\mu t}$ for all $t>0$).
\end{hypothesis}

\begin{lemma}\label{lem:dirattract}
If Hypotheses \ref{hyp:gpt} and \ref{hyp:dirattract} hold, then,
setting $\beta_0:=\gamma\wedge \mu$, for all $\beta\in (0,\beta_0)$, on the event $\Gamma$,
\begin{equation}
\|Y_t-y^*\| = O(e^{-\beta t}).
\end{equation}
\end{lemma}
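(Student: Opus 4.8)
The plan is to decompose the dynamics of $Z=(X,Y)$ into the part moving along the flow $\Phi$ and the perturbation term, and to exploit the fact that $K$ attracts $\overline{\mathcal N}$ exponentially at rate $-\mu$ while the perturbation is controlled at exponential rate $-\gamma$ by Hypothesis \ref{hyp:gpt}. Since $\Pi z^* = x^*$ and the distance $d(\cdot,K)$ is, near $z^*$ and inside $\mathcal N=\mathcal N_1\times\mathcal N_2$, comparable to $\|y-y^*\|$ (indeed $d(z,K)=\|y-y^*\|$ whenever $x\in\mathcal N_1$, since $K=\{(x,y^*):x\in\overline{\mathcal N_1}\}$), it suffices to bound $d(Z_t,K)$. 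On the event $\Gamma$ there is a (random) time $t_0$ such that $Z_t\in\mathcal N$ for all $t\ge t_0$, so we may work entirely inside $\mathcal N$ from time $t_0$ onwards.

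First I would fix $\beta\in(0,\beta_0)$ with $\beta_0=\gamma\wedge\mu$, pick a small gap so that $\beta<\gamma$ and $\beta<\mu$, and choose $T>0$ large enough that $Ce^{-\mu T}\le \tfrac12 e^{-\beta T}$, where $C$ is the attraction constant from Hypothesis \ref{hyp:dirattract}. The key is a discrete recursion over the grid $t_k=t_0+kT$. Using the semiflow property and the defining equation for $Z$, write
$$d(Z_{t_{k+1}},K)\le d(\Phi_T(Z_{t_k}),K)+\|Z_{t_{k+1}}-\Phi_T(Z_{t_k})\|.$$
By Hypothesis \ref{hyp:dirattract} the first term is at most $C e^{-\mu T} d(Z_{t_k},K)$ — more precisely, one applies the attraction bound to the point of $K$ nearest $Z_{t_k}$ and uses that $\Phi$ is Lipschitz on the compact set $\mathcal N$ over the bounded time $T$, so that $d(\Phi_T(Z_{t_k}),K)\le C' e^{-\mu T} d(Z_{t_k},K)$ for a constant $C'$ absorbing the Lipschitz factor; shrinking $\mathcal N$ or enlarging $T$ we may assume $C'e^{-\mu T}\le \tfrac12 e^{-\beta T}=:\lambda<1$. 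By Lemma \ref{lem:gpt} (itself a consequence of Hypothesis \ref{hyp:gpt}), the second term satisfies $\|Z_{t_{k+1}}-\Phi_T(Z_{t_k})\|\le R_k$ where $\limsup_k \tfrac1{t_k}\log R_k\le -\gamma$, hence $R_k\le e^{-\gamma' t_k}$ eventually for any $\gamma'<\gamma$; choosing $\gamma'>\beta$ we get $R_k\le e^{-\beta t_k}\cdot e^{-(\gamma'-\beta)t_k}$, which is summable against the geometric recursion.

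Then I would solve the recursion $u_{k+1}\le \lambda u_k + R_k$ with $u_k=d(Z_{t_k},K)$: unrolling gives $u_k\le \lambda^k u_0 + \sum_{j<k}\lambda^{k-1-j}R_j$, and since $\lambda = \tfrac12 e^{-\beta T}$ and $R_j\le e^{-\gamma' t_j}$ with $\gamma'>\beta$, both terms are $O(e^{-\beta t_k})$; a short computation comparing the geometric rate $\lambda^k\sim e^{-(\beta T+\log 2)k}$ with $e^{-\beta t_k}=e^{-\beta T k}e^{-\beta t_0}$ confirms this. Finally, to pass from the grid to all $t$, for $t\in[t_k,t_{k+1}]$ bound $d(Z_t,K)\le d(\Phi_{t-t_k}(Z_{t_k}),K)+\|Z_t-\Phi_{t-t_k}(Z_{t_k})\|$ and use the same two estimates with $h=t-t_k\le T$: the flow term is $\le C' d(Z_{t_k},K)=O(e^{-\beta t_k})=O(e^{-\beta t})$ and the perturbation term is controlled by the $\sup_{0\le h\le T}$ in Lemma \ref{lem:gpt}. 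Together with $d(Z_t,K)=\|Y_t-y^*\|$ inside $\mathcal N$, this yields $\|Y_t-y^*\|=O(e^{-\beta t})$ on $\Gamma$.

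The main obstacle I anticipate is not any single estimate but the bookkeeping needed to make the contraction genuine: Hypothesis \ref{hyp:dirattract} only gives attraction of $K$ with a multiplicative constant $C$ that may exceed $1$, so one must either wait long enough ($T$ large) or shrink $\mathcal N$ to ensure $C'e^{-\mu T}<1$ after incorporating the Lipschitz constant of $\Phi$ on $\mathcal N$ over time $T$; and one must be careful that the relevant "distance to $K$" really is $\|Y-y^*\|$, which requires $X_t$ to stay in $\mathcal N_1$ — guaranteed on $\Gamma$ once $t\ge t_0$. The rest is the standard Benaïm-style argument of turning an asymptotic pseudotrajectory property plus an exponential attraction rate into an exponential convergence rate, exactly as in the proof of Lemma \ref{lem:gpt} and Lemma 8.7 of \cite{Benaim99}.
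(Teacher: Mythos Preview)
Your approach is precisely the one the paper takes: the paper's proof is a one-line citation of Lemma~8.7 in \cite{Benaim99}, and you are sketching that very argument (discretize time, contract along the flow, add the pseudotrajectory error, solve the resulting recursion, interpolate between grid points).

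One step in your sketch is not justified as written. You want $d(\Phi_T(Z_{t_k}),K)\le C' e^{-\mu T}\, d(Z_{t_k},K)$ and you argue for it by ``applying the attraction bound to the point of $K$ nearest $Z_{t_k}$ and using that $\Phi$ is Lipschitz.'' But Hypothesis~\ref{hyp:dirattract} as stated only gives the \emph{absolute} bound $d(\Phi_T(z),K)\le C e^{-\mu T}$, while the Lipschitz estimate on $\Phi_T$ yields $d(\Phi_T(z),K)\le e^{LT}\,d(z,K)$ when $K$ is invariant---an expansion, not a contraction. Neither of these, nor their combination, produces the relative contraction you need for the recursion $u_{k+1}\le\lambda u_k+R_k$. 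The version of exponential attraction that makes Bena\"im's Lemma~8.7 work is the relative bound $d(\Phi_t(z),K)\le C e^{-\mu t}\,d(z,K)$; this is in fact what the paper's application in Section~\ref{sec:nonCV_VRJP} actually verifies (there $\dot y_i=-y_i(1+C(z)y_i^{\alpha-1})$, so $\|y(t)\|\le e^{-(1-\epsilon)t}\|y(0)\|$ on a small enough neighborhood). With that relative bound in hand your recursion is immediate and the rest of your argument is correct.
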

\begin{proof}
This is a consequence of Lemma 8.7 in \cite{Benaim99}.
\end{proof}

\begin{hypothesis} \label{hyp:alpha-holder}
Suppose there are $\alpha>1$ and $C>0$ such that for all $1\le i\le m$ and all $(x,y)\in \mathcal{N}$,
$$|F_i(x,y)-F_i(x,y^*)|\le C \|y-y^*\|^\alpha.$$
\end{hypothesis}

Set $G:\mathbb{R}^m\to\mathbb{R}^m$ be the $C^1$ vector field defined by $G_i(x)=F_i(x,y^*)$, for $1\le i\le m$ and $x\in\mathbb{R}^m$.
For $p>0$, denote $$\Gamma_p:=\Gamma\cap \{\forall t\ge p:\; Z_t\in \mathcal{N}\}.$$ 
For $1\le i\le m$, set $$\tilde{\Psi}_i(t)=\Psi_i(t) + F_i(X_t,Y_t) - F_i(X_t,y^*).$$
\begin{lemma}\label{lem:reducx}
Under Hypotheses \ref{hyp:gpt}, \ref{hyp:dirattract} and \ref{hyp:alpha-holder},  on $\Gamma_p$, it holds that, as $t\to\infty$,  $$\tilde{\Psi}_t=\Pi \Psi_t + O(e^{-\alpha\beta t})$$ for all $\beta\in (0,\beta_0)$ and  that for all $p<s<t$,
$$X_t-X_s=\int_s^t G(X_u)\rmd u+\int_s^t \tilde{\Psi}_u\rmd u + \Pi M_t-\Pi M_s.$$
\end{lemma}
\begin{proof} This lemma is a straightforward consequence of Lemma \ref{lem:dirattract}.
\end{proof}

\subsection{Avoiding repulsive traps}\label{sec:dirinst}
In applications, this subsection will be used for the process $X$ defined in Lemma \ref{lem:reducx}.

In this subsection, we let $F:\mathbb{R}^d\to\mathbb{R}^d$ be a $C^1$ Lipschitz vector field and we consider a finite variation c\`adl\`ag process $Z$ in $\mathbb{R}^d$, adapted to a filtration $(\mathcal{F}_t)_{t\ge 0}$, satisfying the following equation 
$$Z_t-Z_s=\int_s^t F(Z_u)\rmd u+\int_s^t {\Psi}_u\rmd u + M_t-M_s$$
where $M_t$ is a finite variation c\`adl\`ag martingale w.r.t $(\mathcal{F}_t)$ and $\Psi_t=r_t+R_t$, with $r$ and $R$ two $(\mathcal{F}_t)$-adapted processes.

Let $z^*\in\mathbb{R}^d$ and $\Gamma$ an event on which $\lim_{t\to\infty} Z_t = z^*$.
Let $\mathcal{N}$ be a convex neighbourhood of $z^*$.
For $p>0$, set $$\Gamma_p:=\Gamma\cap \{\forall t\ge p:\; Z_t\in \mathcal{N}\}.$$ Then, $\Gamma=\cup_{p>0}\Gamma_p$.

We will suppose that
\begin{hypothesis}\label{hyp:z*}
$z^*$ is a repulsive equilibrium, i.e. $F(z^*)=0$ and all eigenvalues of $DF(z^*)$ have a positive real part. Moreover $DF(z^*)=\lambda I$, with $\lambda>0$ and $I$ the identity $d\times d$ matrix. 
\end{hypothesis}


For all $z\in\mathbb{R}^d$,
\begin{align*}
F(z)&=F(z^*)+\int_0^1 DF(z^*+u(z-z^*)).(z-z^*) \rmd u\\
&= \lambda (z-z^*) + J(z).(z-z^*)
\end{align*}
where we have set $$J(z)=\int_0^1 (DF(z^*+u(z-z^*))-DF(z^*))\rmd u.$$ 
Then, for all $t\ge s$,
\begin{equation}\label{eq:zl}
Z_t-Z_s=\int_s^t \lambda Z_u \rmd u+\int_s^t \left[\Psi_u+J(Z_u).(Z_u-z^*)\right]\rmd u + M_t-M_s.
\end{equation} 

%


Let us fix $p>0$.
Note that \eqref{eq:zl} implies that, for all $t\ge p$,
\begin{equation}\label{eq:zl2}
Z_t=e^{\lambda t} \left(e^{-\lambda p} Z_p +\int_p^t\bar{\Psi}_sds+ \bar M_t-\bar M_p\right)
\end{equation}
where $\bar M_t= \int_0^t e^{-\lambda s} \rmd M_s$ and $\bar{\Psi}_t=\bar{r}_t+\bar{R}_t$, with
$$\bar{r}_t:= e^{-\lambda t}r_t \quad \text{  and  } \quad \bar{R}_t:=e^{-\lambda t}[R_t+J(Z_t).(Z_t-z^*)] .$$
We assume that the following hypothesis is fulfilled:

\begin{hypothesis} \label{hyp:RM}
There is a random variable $K$ finite on $\Gamma$ and
there is a continuous function $a:[0,\infty)\to (0,\infty)$ such that $\int_0^\infty a(s)\rmd s <\infty$, 
 $\alpha^2(t) :=\int_t^{\infty}a(s)\rmd s=O\big( \int_t^\infty e^{-2\lambda (s-t)} a(s) \rmd s\big)$ as $t\to\infty$
and such that the following items \textit{(i)} and \textit{(ii)} hold.
\begin{enumerate}[(i)] 
\item For each $i$, $\langle M^i\rangle_t = \int_0^t \Lambda^i_s \rmd s$, with $\Lambda^i$ a positive $(\mathcal{F}_t)$-adapted process. Setting $\Lambda=\sum_i \Lambda^i$, we have that a.s. on $\Gamma$, for all $t>0$,
\begin{align}\label{eq:THM1}
 K^{-1}a(t) \le \Lambda(t)\le Ka (t),
\end{align}
\begin{align}\label{eq:THM2}
\sum_{i=1}^d|\Delta M^i_t|\le K \alpha(t),
\end{align}
\begin{align} \label{eq:THM4}
\int_0^\infty\frac{\|r_s\|^2}{a(s)} \rmd s \le K.
\end{align}
\item As $t\to\infty$,
\begin{align}\label{eq:THM3}\E\left[1_\Gamma \left(\int_t^\infty \|{R}_s\| \rmd s\right)^2\right]=o\big(\alpha^2(t)\big).
\end{align} 
\end{enumerate}
\end{hypothesis}

For $p>0$, define $$G_p=\Gamma_p\cap\{\sup_{t\ge p}\|J(Z_t)\|\le \frac{\lambda}{2}\}\cap\{\sup_{t\ge p}\|Z_t\|\le 1\}.$$


%


\begin{lemma} \label{lem:R}
For all $p>0$, as $t\to\infty$,
\begin{align}\label{eq:R}
\E\left[1_{G_p} \int_t^\infty \|\bar{R}_s\|\rmd s\right]=o(e^{-\lambda t}\alpha(t)).
\end{align} 
\end{lemma}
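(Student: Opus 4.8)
The plan is to estimate $\bar{R}_s = e^{-\lambda s}[R_s + J(Z_s).(Z_s - z^*)]$ by splitting it into its two constituent pieces and handling them separately. On $G_p$ we have the a priori bounds $\|J(Z_s)\| \le \lambda/2$ and $\|Z_s\| \le 1$, which give the pointwise control $\|J(Z_s).(Z_s-z^*)\| \le \frac{\lambda}{2}\|Z_s - z^*\|$. The genuine content of the lemma is therefore: (a) the term $e^{-\lambda s} R_s$ is handled by Hypothesis \ref{hyp:RM}(ii), specifically \eqref{eq:THM3}, which controls $\E[1_\Gamma(\int_t^\infty \|R_s\|\,ds)^2]$ in terms of $\alpha^2(t)$; (b) the term $e^{-\lambda s} J(Z_s).(Z_s-z^*)$ requires knowing that $Z_s - z^*$ itself decays, and this is precisely where the representation \eqref{eq:zl2} together with Hypothesis \ref{hyp:RM} is needed.

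First I would write, for $s \ge p$ on $G_p$,
\begin{align*}
\int_t^\infty \|\bar R_s\|\,ds \le \int_t^\infty e^{-\lambda s}\|R_s\|\,ds + \frac{\lambda}{2}\int_t^\infty e^{-\lambda s}\|Z_s - z^*\|\,ds.
\end{align*}
For the first summand, since $e^{-\lambda s} \le e^{-\lambda t}$ on $[t,\infty)$, one gets $\int_t^\infty e^{-\lambda s}\|R_s\|\,ds \le e^{-\lambda t}\int_t^\infty \|R_s\|\,ds$, and taking expectations against $1_{G_p} \le 1_\Gamma$ and applying Cauchy–Schwarz (or just using that $\alpha$ is decreasing) together with \eqref{eq:THM3} yields a contribution that is $e^{-\lambda t}\cdot o(\alpha(t))$, hence $o(e^{-\lambda t}\alpha(t))$ as required. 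The delicate summand is the second one, which forces us to first establish that $\E[1_{G_p}\|Z_s - z^*\|]$, or more precisely $\E[1_{G_p}\int_t^\infty e^{-\lambda s}\|Z_s-z^*\|\,ds]$, is $o(e^{-\lambda t}\alpha(t))$.

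To control $\|Z_s - z^*\|$ I would use \eqref{eq:zl2}: on $G_p$, $Z_s = e^{\lambda s}(e^{-\lambda p}Z_p + \int_p^s \bar\Psi_u\,du + \bar M_s - \bar M_p)$ and, since $Z_s$ stays bounded while $e^{\lambda s}\to\infty$, the bracketed quantity must converge to $0$; hence $e^{-\lambda s}Z_s = $ (tail of $\int_p^\cdot \bar\Psi$) $+$ (tail of $\bar M$), giving roughly $\|Z_s - z^*\| \lesssim e^{\lambda s}\big(\int_s^\infty \|\bar\Psi_u\|\,du + \|\bar M_\infty - \bar M_s\|\big)$ up to the correction coming from $z^*$ (one checks $z^* = 0$ is forced when $F(z^*)=0$, $DF(z^*)=\lambda I$, or carries it along). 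Now $\|\bar\Psi_u\| \le e^{-\lambda u}\|r_u\| + \|\bar R_u\|$, so this produces a Gronwall-type self-referential inequality for the quantity $\phi(t) := \E[1_{G_p}\int_t^\infty e^{-\lambda s}\|Z_s - z^*\|\,ds]$ (or its pointwise analogue): the $\bar R$ contribution on the right is $\frac{\lambda}{2}$ times essentially the same object, which can be absorbed. The remaining inputs — the $r$-term via \eqref{eq:THM4} and Cauchy–Schwarz, and the martingale term via $\E[\|\bar M_\infty - \bar M_s\|^2] = \sum_i \E[\int_s^\infty e^{-2\lambda u}\Lambda^i_u\,du] \lesssim \int_s^\infty e^{-2\lambda u} a(u)\,du$ using \eqref{eq:THM1} and the Burkholder–Davis–Gundy / Itô isometry for $\bar M$ — are then bounded using the standing assumption $\alpha^2(t) = O(\int_t^\infty e^{-2\lambda(s-t)}a(s)\,ds)$, which is exactly tailored so that $e^{\lambda s}(\int_s^\infty e^{-2\lambda u}a(u)\,du)^{1/2} \lesssim e^{-\lambda s}\alpha(s)$-type estimates close up after integrating in $s$ from $t$ to $\infty$.

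The main obstacle I anticipate is the self-referential (Gronwall) structure: $\bar R$ appears on both sides of the estimate, so one must set up the bound for $\int_t^\infty\|\bar R_s\|\,ds$ (or for $\|Z_s-z^*\|$) carefully enough that the $\|J(Z_s)\|\le\lambda/2$ bound genuinely lets the self-term be absorbed with a factor strictly less than $1$, and one must be scrupulous about the order of quantifiers (the expectation is taken on the fixed event $G_p$, $p$ fixed, $t\to\infty$). A secondary technical point is justifying the passage from the representation \eqref{eq:zl2} — which holds for $Z_t$ for each fixed $t$ — to a tail estimate for $\|Z_s - z^*\|$ uniformly in $s\ge t$, which requires knowing $\int_p^\infty \bar\Psi_u\,du$ and $\bar M_\infty$ exist (finiteness of $K$ on $\Gamma$, \eqref{eq:THM4} and the $\Lambda$-bound take care of this), and then reading off that the limit of the bracket in \eqref{eq:zl2} is $e^{-\lambda p}z^*$-compatible with $Z_s\to z^*$. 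Once that is in place, combining the pieces and invoking the $\alpha^2(t)=O(\cdot)$ hypothesis gives \eqref{eq:R}.
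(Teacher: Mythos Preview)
Your reduction of the first piece $e^{-\lambda s}\|R_s\|$ via \eqref{eq:THM3} is correct, and your instinct to control $\|Z_s-z^*\|$ through the representation \eqref{eq:zl2} is reasonable. But the argument as you describe it has a genuine gap: it only delivers $O(e^{-\lambda t}\alpha(t))$, not $o(e^{-\lambda t}\alpha(t))$.

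The issue is the martingale tail. Writing (take $z^*=0$) $e^{-\lambda s}\|Z_s\|\le \int_s^\infty e^{-\lambda u}\|r_u\|\,du+\int_s^\infty\|\bar R_u\|\,du+\|\bar M_\infty-\bar M_s\|$ and running a backward Gronwall in $G(s)=\int_s^\infty e^{-\lambda u}\|Z_u\|\,du$ with absorption constant $\lambda/2$, you obtain $G(t)\le e^{-\lambda t/2}\int_t^\infty e^{\lambda s/2}B(s)\,ds$ where $B$ collects the $r$, $R$, and martingale contributions. The $r$ and $R$ pieces do give $o$, but on $G_p$ one has only $\E\big[\|\bar M_\infty-\bar M_s\|^2\big|\mathcal F_s\big]\le K e^{-2\lambda s}\alpha^2(s)$, so the martingale piece contributes $e^{-\lambda t/2}\int_t^\infty e^{-\lambda s/2}\alpha(s)\,ds\asymp e^{-\lambda t}\alpha(t)$ with no extra decay. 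Bounding $\|J(Z_s)\|$ uniformly by $\lambda/2$ then leaves you at $O(e^{-\lambda t}\alpha(t))$. (Note also that the naive version of the Gronwall---integrating the tail inequality in $s$---produces $\int_t^\infty(u-t)\|\bar R_u\|\,du$, which cannot be absorbed at all; you must work pointwise as above.)

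The missing idea is that the little-$o$ has to come from $\sup_{s\ge t}\|J(Z_s)\|\to 0$ on $\Gamma$ (since $J$ is continuous and $J(z^*)=0$), not from the absorption constant. This is exactly what the paper does, and by a different route: it derives a \emph{differential} inequality for $\|Z_t\|$ directly from the change-of-variables formula,
\[
\|Z_t\|-\|Z_s\|\ge \tfrac{\lambda}{2}\int_s^t\|Z_u\|\,du+\hbox{(martingale)}+\hbox{(drift)}\qquad\hbox{on }G_p,
\]
from which it extracts $\E\big[1_{G_p}(\int_t^\infty\|Z_s\|\,ds)^2\big]^{1/2}=O(\alpha(t))$ (no exponential weight). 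Then Cauchy--Schwarz gives
\[
\E\Big[1_{G_p}\int_t^\infty e^{-\lambda s}\|J(Z_s)Z_s\|\,ds\Big]\le e^{-\lambda t}\,\E\big[1_{G_p}\sup_{s\ge t}\|J(Z_s)\|^2\big]^{1/2}\,\E\Big[1_{G_p}\Big(\int_t^\infty\|Z_s\|\,ds\Big)^2\Big]^{1/2},
\]
and the middle factor goes to $0$ by dominated convergence. Your approach can be repaired by the same device---once you have $\E[1_{G_p}G(t)^2]^{1/2}=O(e^{-\lambda t}\alpha(t))$ from the pointwise Gronwall, apply Cauchy--Schwarz with the factor $\sup_{s\ge t}\|J(Z_s)\|$---but that step is essential and is absent from your outline.
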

\begin{proof} Fix $p>0$.
Since Hypothesis \ref{hyp:RM}-(ii) holds, to prove the lemma it suffices to prove that as $t\to\infty$,
$$\E\left[1_{G_p}\int_t^\infty e^{-\lambda s}\|J(Z_s).(Z_s-z^*)\| \rmd s\right]= o(e^{-\lambda t}\alpha(t)).$$ 
To simplify the notation, we suppose $z^*=0$. For $s<t$, (using the convention: $\frac{z}{\|z\|}=0$ if $z=0$)
\begin{align*}
\|Z_t\|-\|Z_s\|
=& \;\lambda\int_s^t \|Z_u\| \rmd u + \int_s^t \left\langle \frac{Z_u}{\|Z_u\|},J(Z_u)Z_u\right\rangle \rmd u\\
& + \int_s^t \left\langle \frac{Z_{u-}}{\|Z_{u-}\|}, \rmd M_u\right\rangle +  \int_s^t \left\langle \frac{Z_u}{\|Z_u\|}, \Psi_u\right\rangle \rmd u\\
&+ \sum_{s<u\le t} 1_{\{Z_{u-}\ne 0\}}\left(\Delta \|Z_u\| - \left\langle \frac{Z_{u-}}{\|Z_{u-}\|},\Delta Z_u\right\rangle \right).
\end{align*}
Using the inequality $\|z+\delta\|-\|z\|\ge \langle \frac{z}{\|z\|},\delta \rangle$, we have for all $u>p$,
$$\Delta \|Z_u\| -\left\langle \frac{Z_{u-}}{\|Z_{u-}\|},\Delta Z_u\right\rangle\ge 0.$$
Furthermore, using Cauchy-Schwarz inequality, on the event $G_p$,
 $$\left\langle \frac{Z_u}{\|Z_u\|},J(Z_u)Z_u\right\rangle \ge - \|J(Z_u)Z_u\|\ge -\sup_{t\ge p}\|J(Z_t)\|.\|Z_u\|\ge-\frac{\lambda}{2}\|Z_u\|$$
for all $u> p$. From the above it follows that on the event $G_p$,
\begin{align*}
\|Z_t\|-\|Z_s\|
\ge& \;\frac{\lambda}{2}\int_s^t \|Z_u\| \rmd u  + \int_s^t \left\langle \frac{Z_{u-}}{\|Z_{u-}\|}, \rmd M_u\right\rangle + \int_s^t \left\langle \frac{Z_u}{\|Z_u\|},\Psi_u\right\rangle \rmd u
\end{align*}
for all $t>s>p$. As a consequence, using Doob's inequality and Hypothesis \ref{hyp:RM}, we obtain that
\begin{eqnarray*}
\frac{\lambda}{2}\E\left[1_{G_p}\left(\int_t^\infty  \|Z_s\|\rmd s\right)^2\right]^\frac{1}{2}
&\le&\; \E\left[1_{G_p}\sup_{T>t}\left|\int_t^T \left\langle \frac{Z_{u-}}{\|Z_{u-}\|}, \rmd M_u\right\rangle\right|^2\right]^\frac{1}{2}\\
& &+ \; \alpha(t) \E\left[1_{G_p}\int_t^\infty \frac{\|r_u\|^2}{a(u)} \rmd u\right]^\frac{1}{2}\\
& &+ \; \E\left[1_{G_p}\left(\int_t^\infty \|R_u\| \rmd u\right)^2\right]^\frac{1}{2}\\
&= &\; O(\alpha(t)).
\end{eqnarray*}

Using Cauchy-Schwarz inequality, we have
\begin{align*}
\E\left[1_{G_p}\int_t^\infty  e^{-\lambda s}\|J(Z_s)Z_s\|\rmd s\right]
\le & \; e^{-\lambda t}\E\left[1_{G_p}\sup_{s\ge t}  \|J(Z_s)\|^2\right]^{\frac12}
\E\left[1_{G_p}\left(\int_t^\infty  \|Z_s\|\rmd s\right)^2\right]^{\frac12}.
\end{align*}
Note that on $G_p$, $\sup_{s\ge t}  \|J(Z_s)\|\le \lambda/2$ and $\lim_{t\to\infty}\sup_{s\ge t}  \|J(Z_s)\|=0$ almost surely. Therefore, we conclude that 
$\E[1_{G_p}\int_t^\infty  e^{-\lambda s} \|J(Z_s)Z_s\|\rmd s]=o(e^{-\lambda t} \alpha(t))$ as $t\to \infty$.
\end{proof}

Hypothesis \ref{hyp:RM} ensures in particular that a.s. on $G_p$, $\int_p^\infty \bar{\Psi}_s\rmd s$ and $\bar M_{\infty}$ are well defined and almost surely finite. Let $L$ be a random variable such that $$L=\int_p^\infty \bar{\Psi}_s\rmd s+\bar M_{\infty}-\bar M_{p}\ \ \text{ on  } G_p.$$
Letting $t\to\infty$ in \eqref{eq:zl2}, $\lambda$ being positive, we have $L=-e^{-\lambda p} Z_p\ \text{\  a.s. on }G_p.$
We now apply Theorem \ref{THM:THMA} to the martingale $\bar M_t$ and to the adapted process $\bar \Psi_t$. We have $\langle \bar{M}^i\rangle_t=\int_0^t \bar{\Lambda}^i_s \rmd s,$ with $\bar{\Lambda}^i_s=e^{-2\lambda s}\Lambda^i_s$. 
We also have $|\Delta \bar{M}_t|=e^{-\lambda t}|\Delta M_t|$.  
Hypothesis \ref{hyp:RM}-(i) implies that \eqref{eq:THMA1}, \eqref{eq:THMA2} and \eqref{eq:THMA4} are satisfied with the function $\bar{a}(t)=e^{-2\lambda t}a(t)$. Finally, \eqref{eq:THMA3} follows from Lemma \ref{lem:R}. Therefore,  we obtain that
 $$\P(G_p)=\P(G_p\cap\{L=-e^{-\lambda p} Z_p\}]=0.$$  

Since $\P(\Gamma)=\lim_{p\to\infty}\P(G_p)=0$, we have proved the following theorem:
\begin{theorem}\label{THM:nonCV}
Under Hypotheses \ref{hyp:z*} and \ref{hyp:RM}, we have $\P(\Gamma)=0$.
\end{theorem}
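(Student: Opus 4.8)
The plan is simply to assemble the ingredients developed in the lines preceding the statement. First I would reduce the claim to showing $\P(G_p)=0$ for each fixed $p>0$. Since $J$ is continuous with $J(z^*)=0$ and $z^*$ lies in the interior of $\mathcal N$ (which we may take inside the closed unit ball), on the event $\Gamma$ one has, for all $t$ large enough, $Z_t\in\mathcal N$, $\|J(Z_t)\|\le\lambda/2$ and $\|Z_t\|\le 1$ simultaneously; hence the events $G_p$ increase to $\Gamma$ as $p\to\infty$, so $\P(\Gamma)=\lim_{p\to\infty}\P(G_p)$ and it suffices to handle a single $G_p$.

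Next I would use the variation-of-constants identity \eqref{eq:zl2} obtained by writing $F(z)=\lambda(z-z^*)+J(z)(z-z^*)$ near $z^*$ and integrating \eqref{eq:zl}:
$$Z_t=e^{\lambda t}\Big(e^{-\lambda p}Z_p+\int_p^t\bar\Psi_s\,ds+\bar M_t-\bar M_p\Big),\qquad \bar M_t=\int_0^t e^{-\lambda s}\,dM_s,$$
for $t\ge p$, with $\bar\Psi_t=\bar r_t+\bar R_t$ as defined. Hypothesis \ref{hyp:RM}-(i) forces $\langle\bar M^i\rangle$ to be dominated by the integrable function $\bar a(t)=e^{-2\lambda t}a(t)$, so $\bar M$ converges a.s.\ to a finite limit $\bar M_\infty$; this, together with \eqref{eq:THM4} and Lemma \ref{lem:R}, also gives that $\int_p^\infty\bar\Psi_s\,ds$ converges absolutely on $G_p$. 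Letting $t\to\infty$ in the display, the parenthesis converges while $e^{\lambda t}\to\infty$ and $Z_t\to z^*$ stays bounded; the parenthesis must therefore vanish in the limit, so that, putting $L:=\int_p^\infty\bar\Psi_s\,ds+\bar M_\infty-\bar M_p$, we have $L=-e^{-\lambda p}Z_p$ a.s.\ on $G_p$.

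Finally I would invoke the general non-convergence result Theorem \ref{THM:THMA}, applied to the martingale $\bar M$ and the adapted drift $\bar\Psi=\bar r+\bar R$: its two-sided quadratic-variation bound, its jump bound and its mean-square control of the $r$-part all hold with $\bar a(t)=e^{-2\lambda t}a(t)$ by Hypothesis \ref{hyp:RM}-(i) (using $|\Delta\bar M_t|=e^{-\lambda t}|\Delta M_t|$), while the $R$-part condition is exactly Lemma \ref{lem:R}. The conclusion of Theorem \ref{THM:THMA} is that on $G_p$ the random variable $L$ cannot almost surely coincide with a prescribed value, i.e.\ $\P\big(G_p\cap\{L=-e^{-\lambda p}Z_p\}\big)=0$; combining this with the previous paragraph yields $\P(G_p)=0$, and letting $p\to\infty$ gives $\P(\Gamma)=0$.

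The substance of the argument is hidden in Theorem \ref{THM:THMA} --- the Brandi\`ere--Duflo-type fact that a martingale whose accumulated variance is bounded below (the lower bound in \eqref{eq:THM1}) is ``too noisy'' to converge to a target measurable with respect to the far past --- and that theorem is proved separately. Within the present proof the only genuine work is the verification of its hypotheses, which is precisely what the bookkeeping in Hypothesis \ref{hyp:RM} and the estimate of Lemma \ref{lem:R} were arranged to supply; the mildly delicate point is the localization onto the events $G_p$. I therefore do not expect any serious obstacle beyond carefully matching notation when feeding $\bar M,\bar\Psi,\bar a$ into Theorem \ref{THM:THMA}.
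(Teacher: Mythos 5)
Your proposal is correct and follows essentially the same route as the paper: localize onto the events $G_p$, use the variation-of-constants form \eqref{eq:zl2} to identify $L=-e^{-\lambda p}Z_p$ on $G_p$, and apply Theorem \ref{THM:THMA} to $\bar M$ and $\bar\Psi$ with $\bar a(t)=e^{-2\lambda t}a(t)$, using Lemma \ref{lem:R} (together with the comparability condition $\alpha^2(t)=O\big(\int_t^\infty e^{-2\lambda(s-t)}a(s)\,ds\big)$ in Hypothesis \ref{hyp:RM}) to verify the remainder condition. The paper's ``proof'' is exactly this assembly of the preceding material, so there is nothing to add.
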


\subsection{Application to strongly VRJP on complete graphs}
\label{sec:nonCV_VRJP}

Recall from Section \ref{sec:Dyn} that the empirical occupation measure process $(Z_t)_{t\ge0}$ satisfies the following equation
\begin{align}\label{vrjp}
Z_t-Z_s &=\;
\int_s^t \frac{1}{u+\ell_0} F(Z_u) \rmd u + \frac{I[X_s]}{(s+\ell_0)w_s}- \frac{I[X_t]}{(t+\ell_0)w_t}\\ \nonumber
&+ \int_s^t \Psi_u \rmd u + \int_s^t \frac{\rmd M_u}{(u+\ell_0)w_u},
\end{align}
where $$\Psi_t=I[X_t]\frac{\rmd}{\rmd t}\left(\frac{1}{(t+\ell_0)w_t}\right)\quad \text{ and } \quad M_t=I[X_t]-\int_0^t A_s[X_s] \rmd s.$$
Recall that $\langle M^j\rangle_t=\int_0^t \Lambda^j_s \rmd s$, where $\Lambda^j$ is defined in \eqref{eq:defLambdaj}.

For $t\ge t_0:=\log(\ell_0)$, let 
\begin{equation}
 \label{def:Zhat}
 \widehat{Z}_t=Z_{e^t-\ell_0}+\frac{I[X_{e^t-\ell_0}]}{e^tw_{e^t-\ell_0}}.
\end{equation}
Equation (\ref{vrjp}) is thus equivalent to
\begin{align}\label{mvrjp}\widehat{Z}_t-\widehat{Z}_s=\int_s^t F(\widehat{Z}_u)\rmd u+\int_s^t \widehat{\Psi}_u\rmd u + \widehat{M}_t-\widehat{M}_s,
\end{align}
where we have set
\begin{align*}
\widehat{\Psi}_t=e^t \Psi_{e^t-\ell_0} +F(Z_{e^t-\ell_0})-F(\widehat{Z}_t) \qquad \hbox{and} \qquad \widehat{M}_t=\int_0^{e^t-\ell_0}\frac{\rmd M_s}{(s+\ell_0)w_s},
\end{align*}
which are respectively an adapted process and a martingale w.r.t the filtration $(\widehat{\mathcal{F}}_t)_{t\ge t_0}:=(\mathcal{F}_{e^t-\ell_0})_{t\ge t_0}$.
Note that $\langle \widehat{M}^j\rangle_t-\langle \widehat{M}^j\rangle_{t_0}=\int_{t_0}^t \widehat{\Lambda}^j_s \rmd s$, with $\widehat{\Lambda}^j_s=\frac{\Lambda^j_{e^s-\ell_0}}{e^sw^2_{e^s-\ell_0}}.$

In this subsection, we will apply the results of Subsection \ref{sec:dirattract} and Subsection \ref{sec:dirinst} to the process $(\widehat{Z}_t)_{t\ge0}$ and thus show that $P[Z_{t}\to z^*]=P[\widehat{Z}_{t}\to z^*]=0$ for each unstable equilibrium $z^*$.

\begin{lemma}\label{lem:togpt}
 There exists a positive constant $K$ such that for all $t>t_0$, a.s.
\begin{align*}
&\|\widehat{\Psi}_t\|\le K e^{-(\alpha+1)t},\quad \quad  \widehat{\Lambda}^j_t \le K e^{-(\alpha+1)t} \hbox{ and } \quad |\Delta \widehat{M}_t^j|\le K e^{-(\alpha+1)t}.
\end{align*}
\end{lemma}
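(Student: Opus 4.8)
The plan is to reduce all three bounds to two pathwise facts already used in the proof of Lemma~\ref{noise}: on a complete graph $w_s=\sum_j L(j,s)^\alpha\ge c\,(\ell_0+s)^\alpha$ with $c=d^{1-\alpha}$ (Jensen's inequality applied to $L(1,s)+\cdots+L(d,s)=\ell_0+s$), and, off jump times, $0\le \frac{dw_s}{ds}=\alpha L(X_s,s)^{\alpha-1}\le \alpha(\ell_0+s)^{\alpha-1}$; together with the trivial remarks $\|I[X_s]\|=1$ (a row of the identity) and $w_s\le d(\ell_0+s)^\alpha$. Since all these objects are evaluated at $s=e^t-\ell_0$, for which $s+\ell_0=e^t$, one has in particular $w_{e^t-\ell_0}\ge c\,e^{\alpha t}$.

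For $\widehat\Lambda^j_t=\Lambda^j_{e^t-\ell_0}/(e^t w^2_{e^t-\ell_0})$ I would first check from~\eqref{eq:defLambdaj} that $\Lambda^j_s\le w_s$ for every $s$: if $X_s=j$ then $\Lambda^j_s=\sum_{k\ne j}w^{(k)}_s\le w_s$ (on a complete graph every vertex is a neighbour of $j$), and if $X_s\ne j$ then $\Lambda^j_s=w^{(j)}_s\le w_s$. Hence $\widehat\Lambda^j_t\le (e^t w_{e^t-\ell_0})^{-1}\le c^{-1}e^{-(\alpha+1)t}$. The bound on $|\Delta\widehat M^j_t|$ is shorter still: a jump of $\widehat M^j=\int_0^{e^t-\ell_0}\frac{dM^j_s}{(s+\ell_0)w_s}$ at time $t$ is a jump of $M^j$ at time $e^t-\ell_0$ rescaled by $(e^t w_{e^t-\ell_0})^{-1}$, and since the compensator in $M^j_s=\mathbf{1}_{\{X_s=j\}}-\int_0^s A^{X_u,j}_u\,du$ is absolutely continuous, $\Delta M^j_s=\mathbf{1}_{\{X_s=j\}}-\mathbf{1}_{\{X_{s-}=j\}}\in\{-1,0,1\}$, whence $|\Delta\widehat M^j_t|\le (e^t w_{e^t-\ell_0})^{-1}\le c^{-1}e^{-(\alpha+1)t}$.

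For $\widehat\Psi_t=e^t\Psi_{e^t-\ell_0}+F(Z_{e^t-\ell_0})-F(\widehat Z_t)$ I would handle the two contributions separately. Writing $\Psi_s=I[X_s]\frac{d}{ds}\big(\frac1{(s+\ell_0)w_s}\big)$ and invoking, exactly as in the proof of Lemma~\ref{noise}, the estimate $\big|\frac{d}{ds}\big(\frac1{(s+\ell_0)w_s}\big)\big|\le C_1 (\ell_0+s)^{-(\alpha+2)}$ for a constant $C_1$ (which follows from the two displayed facts above), one gets $\|\Psi_{e^t-\ell_0}\|\le C_1 e^{-(\alpha+2)t}$, hence $\|e^t\Psi_{e^t-\ell_0}\|\le C_1 e^{-(\alpha+1)t}$. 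For the second contribution, $\widehat Z_t-Z_{e^t-\ell_0}=I[X_{e^t-\ell_0}]/(e^t w_{e^t-\ell_0})$ has norm $(e^t w_{e^t-\ell_0})^{-1}\le c^{-1}e^{-(\alpha+1)t}$, and since for large $t$ all the points $\widehat Z_t$ lie in the open positive orthant inside a fixed compact neighbourhood of $\Delta$ on which the vector field $F$ of~\eqref{mvrjp} is Lipschitz (constant $K_F$), one gets $\|F(Z_{e^t-\ell_0})-F(\widehat Z_t)\|\le K_F c^{-1}e^{-(\alpha+1)t}$. Adding the two, $\|\widehat\Psi_t\|\le (C_1+K_F c^{-1})e^{-(\alpha+1)t}$; taking $K$ to be the maximum of $c^{-1}$ and $C_1+K_F c^{-1}$ (enlarged, if one insists on the bound literally for all $t>0$, so as to absorb any bounded initial time interval) completes the proof.

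I do not expect a genuine obstacle, the estimates being elementary. The only two points requiring a word of care are that $F$ must be evaluated on the slightly-larger-than-$\Delta$ domain where $\widehat Z_t$ actually lives — harmless, since $\widehat Z_t$ stays in the positive orthant and approaches $\Delta$ at rate $e^{-(\alpha+1)t}$, so it remains in a compact region where the natural $C^1$ extension of $F$ is Lipschitz — and that the derivative bound for $1/((s+\ell_0)w_s)$ holds only off the (Lebesgue-null) jump times, which is precisely what the absolutely continuous integrand $\Psi$ requires.
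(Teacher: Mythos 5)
Your proof is correct and follows essentially the same route as the paper's: the lower bound $w_s\ge k(\ell_0+s)^\alpha$, the observation $\Lambda^j\le w$, the fact that jumps of $M^j$ have size at most $1$, and the Lipschitz bound on $F$ combined with $\|\widehat Z_t-Z_{e^t-\ell_0}\|=(e^tw_{e^t-\ell_0})^{-1}$. You merely spell out details (the derivative estimate for $\Psi$, the domain of $F$) that the paper leaves implicit.
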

\begin{proof} Let us first recall  that $w_t\ge k(t+\ell_0)^\alpha$ for some constant $k$. 
Using that $F$ is Lipschitz, we easily obtain the first inequality. To obtain the second inequality, observe that for each $j$, $\Lambda^j_t\le w_t$. Thus for all $t>t_0$,
$$\widehat{\Lambda}^j_t\le \frac{1}{e^tw_{e^t-\ell_0}}\le k^{-1} e^{-(\alpha+1)t}.$$
Finally,
$$|\Delta \widehat{M}_t^j|=\frac{|\Delta I[X_{e^t-\ell_0}]|}{e^tw_{e^t-\ell_0}}\le \frac{1}{e^tw_{e^t-\ell_0}}
\le k^{-1}e^{-(\alpha+1)t}.$$ 

\end{proof}
\begin{theorem}\label{thm:nonCV_VRJP}
Assume that $z^*$ is an unstable equilibrium of the vector field $F$ defined by \eqref{vecF}. Then 
$\P[Z_t\to z^*]=0$.
\end{theorem}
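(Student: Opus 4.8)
The plan is to identify $z^*$ with a point of the set $\mathcal{U}$ and to apply the general non-convergence machinery of Subsection~\ref{sec:dirinst} to the process $\widehat{Z}$, after first using Subsection~\ref{sec:dirattract} to pass to the complementary coordinates spanned by the unstable directions of $DF(z^*)$. Recall that if $z^*=z_{j_1,\dots,j_k}$ then $DF(z^*)$ has eigenvalue $-1$ with multiplicity $d-k+1$ (the ``stable'' directions, which include the coordinates $i\notin\{j_1,\dots,j_k\}$ together with the direction transversal to $\Delta$) and eigenvalue $\alpha-1>0$ with multiplicity $k-1$ (the ``unstable'' directions, lying inside the face $\{z_i=0,\ i\notin\{j_1,\dots,j_k\}\}$). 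So I would take $m=k-1$, write $\widehat Z_t=(X_t,Y_t)$ in coordinates adapted to this splitting (the $Y$-block being the stable directions together with the $i\notin\{j_1,\dots,j_k\}$ coordinates), and let $K=\{y=y^*\}$, which is precisely the unstable manifold of $z^*$ inside $\Delta$; it attracts a convex neighborhood $\mathcal N$ of $z^*$ exponentially at rate $\mu=1$ (or $1-\epsilon$), so Hypothesis~\ref{hyp:dirattract} holds. Lemma~\ref{lem:togpt} gives $\|\widehat\Psi_t\|\le Ke^{-(\alpha+1)t}$ and $\widehat\Lambda^j_t\le Ke^{-(\alpha+1)t}$, which by the Remark after Hypothesis~\ref{hyp:gpt} verifies Hypothesis~\ref{hyp:gpt} with $\gamma=\alpha+1$ (or $\gamma=(\alpha+1)/2$ if one prefers to use Theorem~\ref{cvthrm} directly); and Hypothesis~\ref{hyp:alpha-holder} holds with exponent $\alpha$ because $F$, being built from the smooth map $\pi$, is $C^1$ and in fact $F_i(x,y)-F_i(x,y^*)$ vanishes to order at least $\alpha$ in $y-y^*$ on the relevant face — this last point requires a short computation using the explicit form \eqref{vecF} near the face $\{z_i=0\}$, since on that face $z_i^\alpha$ and its first-order Taylor coefficient both vanish.

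Once Lemmas~\ref{lem:gpt}, \ref{lem:dirattract} and \ref{lem:reducx} are applied, I obtain on $\Gamma_p$ a reduced equation for $X_t\in\mathbb R^{k-1}$ of the form $X_t-X_s=\int_s^t G(X_u)du+\int_s^t\widetilde\Psi_u du+\Pi\widehat M_t-\Pi\widehat M_s$, where $G(x)=F(x,y^*)$ has $z^*$ (in the $x$-coordinates, call it $x^*$) as a repulsive equilibrium with $DG(x^*)=(\alpha-1)I$ — here Hypothesis~\ref{hyp:z*} holds with $\lambda=\alpha-1>0$, using that the eigenvalues of $DF(z^*)$ restricted to the unstable subspace are all equal to $\alpha-1$. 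The next task is to verify Hypothesis~\ref{hyp:RM} for this reduced system. I would take $a(t)=e^{-(\alpha+1)t}$, so $\alpha^2(t):=\int_t^\infty a(s)ds$ is of order $e^{-(\alpha+1)t}$ and the requirement $\alpha^2(t)=O(\int_t^\infty e^{-2\lambda(s-t)}a(s)ds)$ reduces to comparing exponential rates $\alpha+1$ against $2\lambda+(\alpha+1)=2\alpha+\alpha-1$, which clearly holds. For the lower bound $K^{-1}a(t)\le\widehat\Lambda(t)$ I need that $\widehat\Lambda(t)=\sum_j\widehat\Lambda^j_t$ stays bounded below by a constant times $e^{-(\alpha+1)t}$ on $\Gamma$: near $z^*$ the local time at each $j_i$ grows linearly, so $w^{(j_i)}_{e^t-\ell_0}$ is of order $e^{\alpha t}$ and $w_{e^t-\ell_0}$ is of order $e^{\alpha t}$, whence $\widehat\Lambda^{j_i}_t=\Lambda^{j_i}_{e^t-\ell_0}/(e^t w^2_{e^t-\ell_0})$ is of order $e^{-(\alpha+1)t}$ — this gives the two-sided bound \eqref{eq:THM1}. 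The jump bound \eqref{eq:THM2} is Lemma~\ref{lem:togpt} with $\alpha(t)\sim e^{-(\alpha+1)t/2}$; note $|\Delta\widehat M^j_t|\le Ke^{-(\alpha+1)t}\le K\alpha(t)$. Writing $\widehat\Psi_t=\widehat r_t+\widehat R_t$ with $\widehat r_t:=\Pi\widehat\Psi_t$ (of size $e^{-(\alpha+1)t}$) and $\widehat R_t$ the error term $O(e^{-\alpha\beta t})$ coming from Lemma~\ref{lem:reducx}, condition \eqref{eq:THM4} follows from $\|\widehat r_s\|^2/a(s)\le Ke^{-(\alpha+1)s}$, and \eqref{eq:THM3} follows by choosing $\beta$ close enough to $\beta_0=\gamma\wedge\mu$ that $2\alpha\beta>\alpha+1$, which is possible since $\beta_0>1$ (as $\gamma=\alpha+1>1$ and $\mu$ can be taken arbitrarily close to $1$, giving $\beta_0$ close to $1$ and $2\alpha\beta$ close to $2\alpha>\alpha+1$). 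Then Theorem~\ref{THM:nonCV} applied to the reduced system yields $\P(\Gamma_p)=0$ for every $p$, hence $\P(\Gamma)=0$, i.e. $\P[\widehat Z_t\to z^*]=0$; and since $\widehat Z_t-Z_t=I[X_{e^t-\ell_0}]/(e^tw_{e^t-\ell_0})\to 0$, this is the same as $\P[Z_t\to z^*]=0$.

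The main obstacle, I expect, is the bookkeeping at the interface between the two subsections, specifically choosing the coordinate splitting so that all the geometric hypotheses line up simultaneously. One must be careful that the ``stable block'' $Y$ contains not only the genuine stable eigendirections of $DF(z^*)$ but also the ambient direction normal to $\Delta$ (along which $F$ is not even defined in the paper's convention), so in practice one works intrinsically on $\Delta$ and takes $m=k-1=\dim$ of the unstable subspace within $T\Delta$; equivalently, one can embed a neighborhood of $z^*$ in $\Delta$ into $\mathbb R^{d-1}$. A second delicate point is verifying Hypothesis~\ref{hyp:alpha-holder} with the \emph{same} exponent $\alpha$ as the reinforcement exponent: this is a genuine feature of the model (the vector field $F$ is $\alpha$-flat transversally to each face of $\Delta$ because of the term $z_i^\alpha$), but it needs to be checked from \eqref{vecF}, distinguishing the coordinates $i$ with $z_i^*>0$ (where $F_i$ is smooth and the difference is $O(\|y-y^*\|)$ — note $\alpha>1$ so this is dominated, though one must be slightly careful and may want $\alpha\le$ the natural Hölder exponent, so the right statement uses exponent $\min(\alpha,\text{something})$; in fact re-examining, for $i$ with $z_i^*>0$ the map is $C^1$ and the increment is merely Lipschitz in $y$, so Hypothesis~\ref{hyp:alpha-holder} as literally stated would fail unless those coordinates are placed in the $X$-block — but they have eigenvalue $\alpha-1$ too only if $i\in\{j_1,\dots,j_k\}$, so indeed the $X$-block is exactly $\{i\in\{j_1,\dots,j_k\}\}$ modulo the simplex constraint, and for the $Y$-block coordinates $i\notin\{j_1,\dots,j_k\}$ one has $z_i^*=0$ and $F_i(x,y)=-y_i+y_i^\alpha/(\cdots)$, which is genuinely $\alpha$-flat in $y$). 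Getting this dichotomy exactly right is the crux; everything else is exponent arithmetic that the estimates of Lemma~\ref{lem:togpt} and Section~\ref{sec:Dyn} make routine.
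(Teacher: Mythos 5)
Your proposal is correct and follows essentially the same route as the paper: the same coordinate splitting (the paper takes the $X$-block to be all $k$ coordinates $j_1,\dots,j_k$ and, like you, must implicitly work modulo the simplex constraint to get $DG(x^*)=(\alpha-1)I$, since on all of $\mathbb{R}^k$ the linearization $(\alpha-1)I-\frac{\alpha}{k}N$ also has the eigenvalue $-1$ along the all-ones direction), the same observation that on the $Y$-coordinates $F$ takes the form $-y_i(1+C(z)y_i^{\alpha-1})$ so that Hypotheses \ref{hyp:dirattract} and \ref{hyp:alpha-holder} hold with $\mu$ close to $1$ and exponent $\alpha$, and the same exponent arithmetic $a(t)=e^{-(\alpha+1)t}$, $\lambda=\alpha-1$, $\beta\in\left(\frac{\alpha+1}{2\alpha},1\right)$ before invoking Theorem \ref{THM:nonCV}. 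Only two cosmetic slips: Hypothesis \ref{hyp:gpt} holds with $\gamma=\frac{\alpha+1}{2}$, not $\alpha+1$ (the angle brackets of $\widehat M$ increase by $O(e^{-(\alpha+1)s})=O(e^{-2\gamma s})$), and $\beta_0=\gamma\wedge\mu$ is slightly less than $1$, not greater; neither affects the conclusion, since all that is used is that $\beta$ can be taken arbitrarily close to $1$.
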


\begin{proof}
Note first that Lemma \ref{lem:togpt} implies that Hypothesis \ref{hyp:gpt} holds with $\gamma=\frac{\alpha+1}{2}$.


Let $z^*=(x^*,y^*)$ be an unstable equilibrium, where $y^*=0\in\mathbb{R}^{d-m}$ and $x^*=\left(\frac1m,\frac1m,\dots,\frac1m\right)\in \mathbb{R}^m$, with $m\in\{2,3,\dots, d\}$ (up to a permutation of indices, this describes the set of all unstable equilibria).

Note also that there is a compact convex neighbourhood $\mathcal{N}=\mathcal{N}_1\times \mathcal{N}_2$ of $z^*$ and a positive constant $h$ such that for all $z\in\mathcal{N}$, $H(z)=\sum_i z_i^\alpha \ge h$. 
Setting $C(z)=\frac{1}{H(z)}$, we have that for all $i\in\{1,2,\dots,d-m\}$,
$$ F_{m+i}(x,y)=-y_{i}(1+ C(z)y_{i}^{\alpha-1}).$$
Since $\alpha>1$, it can easily be shown that Hypothesis \ref{hyp:dirattract} holds for all $\mu\in (0,1)$. 
Hypothesis \ref{hyp:alpha-holder} also holds (with the same constant $\alpha$). 

Therefore, Lemma \ref{lem:reducx} can be applied to the process $(\widehat{Z}_t)_{t\ge t_0}$ defined by (\ref{def:Zhat}). Set $\widehat{X}_t:=\Pi\widehat{Z}_t$ and let ${G}:\mathbb{R}^m \to\mathbb{R}^m$ be the vector field defined by $G_i(x)=F_i(x,0)$. Then for all $s<t$,
$$\widehat{X}_t-\widehat{X}_s=\int_s^t G(\widehat{X}_u)\rmd u+\int_s^t 
\hat{r}_u\rmd u + \Pi\widehat{M}_t-\Pi\widehat{M}_s,$$
with $\hat{r}_t=\Pi\widehat{\Psi}_t+O(e^{-\alpha\beta t})$ on $\Gamma$, for all $\beta<\gamma\wedge \mu$. Note that since $\mu$ can be taken as close as we want to $1$ and since $\gamma=\frac{\alpha+1}{2}>1$, $\beta$ can be also taken as close as we want to $1$.

We now apply the result of Section \ref{sec:dirinst}, with $Z$, $F$, $M$, $r$ and $R$ respectively replaced by $\hat{X}$, $G$, $\Pi\hat{M}$, $\hat{r}$ and $0$.
The vector field $G$ satisfies Hypothesis \ref{hyp:z*} with $\lambda=\alpha-1$. 
Let us now check Hypothesis \ref{hyp:RM} with $a(t)=e^{-(\alpha+1)t}$. 
Choosing $\beta\in (\frac{\alpha+1}{2\alpha},1)$, we have that $\hat{r}$ satisfies \eqref{eq:THM4}. 

Set $\widehat{\Lambda}=\sum_{j=1}^m \widehat{\Lambda}^j$. It remains to verify the inequality (\ref{eq:THM1}) for $\widehat{\Lambda}$. Lemma \ref{lem:togpt} shows that for all $t>0$, $$\widehat{\Lambda}_t\le \frac{m}{k}e^{-(\alpha+1)t} = C_+ e^{-(\alpha+1)t}.$$

Fix $\epsilon\in (0,1)$ and choose the neighbourhood $\mathcal{N}$ sufficiently small such that for all $z\in \mathcal{N}$ and $i\in\{1,\dots,m\}$, $m\pi_i(z)\in (1-\epsilon,1+\epsilon)$. Therefore, if $Z_t\in\mathcal{N}$, we have that for $i\in\{1,\dots,m\}$, $w^{(j)}_t=w_t\pi_j(Z_t)\ge \frac{k(1-\epsilon)}{m} (t+\ell_0)^\alpha$. 
Therefore, since $m\ge 2$, if $Z_t\in\mathcal{N}$, we have that for all $1\le i\le m$
$$\Lambda^i_t\ge 1_{\{X_t=i\}} \sum_{j\neq i, 1\le j\le m} w^{(j)}_t + 1_{\{X_t\neq i\}} w^{(i)}_t\ge \min_{1\le j \le m}w_u^{(j)}\ge \frac{k(1-\epsilon)}{m}(u+\ell_0)^\alpha.$$ 
Since $w_t\le d(t+\ell_0)^{\alpha}$, we have that if $Z_t\in\mathcal{N}$,
$$\widehat{\Lambda}_t\ge \frac{k(1-\epsilon)e^{\alpha t}}{e^td^2e^{2\alpha t}}=C_- e^{-(\alpha+1)t}.$$
This proves that Hypothesis \ref{hyp:RM} is satisfied.

As a conclusion Theorem \ref{THM:nonCV} can be applied, and this proves that $\P[Z_t\to z^*]=\P[\widehat X_t\to x^*]=0$.
\end{proof}

\subsection{A theorem on martingales}

In this subsection, we prove a martingale theorem, which is a continuous time version of a theorem by Brandi\`ere and Duflo (see Theorem A in \cite{Brandiere96} or Theorem 3.IV.13 in \cite{Duflo1996}). 
\begin{theorem}\label{THM:THMA}
Let  $M$ be a finite variation c\`adl\`ag martingale in $\mathbb{R}^d$ with $M_0=0$, $r$ and $R$ be adapted processes in $\mathbb{R}^d$ with respect to a filtration $(\mathcal{F}_t)_{t\ge0}$. Set $\Psi_t=r_t+R_t$.

Let $\Gamma$ be an event and let $a:[0,\infty)\to (0,\infty)$ be a continuous function such that $\int_0^\infty a(s)\rmd s <\infty$ and set $\alpha^2(t)=\int_t^\infty a(s) \rmd s$.
Suppose that for each $i$, $\langle M^i\rangle_t = \int_0^t \Lambda^i_s \rmd s$, with $\Lambda^i$ a positive adapted c\`adl\`ag process. Set $\Lambda=\sum_i \Lambda^i$.
Suppose that there is a random variable $K$, such that a.s. on $\Gamma$, $1<K<\infty$ and for all $t>0$,
\begin{align}\label{eq:THMA1}
 K^{-1} a(t) \le \Lambda(t)\le Ka (t).
\end{align}
\begin{align}\label{eq:THMA2}
\sum_i|\Delta M^i_t|\le K \alpha(t).
\end{align}
\begin{align}\label{eq:THMA4}
\int_0^\infty \frac{\|r_s\|^2}{a(s)} \rmd s \le K
\end{align}
and as $t\to\infty$,
\begin{align}\label{eq:THMA3}
\E\left[1_\Gamma\int_t^\infty \|R_s\| \rmd s\right]= o(\alpha(t)).\end{align}

Then, a.s. on $\Gamma$, $S_t:=\int_0^t {\Psi}_s\rmd s+ M_t$ converges a.s. towards a finite random variable $L$ and for all $\mathcal{F}_p$-measurable random variable $\eta$, $p>0$, we have
$$\P[\Gamma\cap\{L=\eta\}]=0.$$
\end{theorem}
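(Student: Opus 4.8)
The plan is to reproduce the continuous-time analogue of the Brandi\`ere--Duflo argument. First I would establish the easy convergence statement: using \eqref{eq:THMA1}, $\langle M\rangle_t=\int_0^t\Lambda(s)\,ds\le K\int_0^t a(s)\,ds<\infty$, so $M$ is an $L^2$-bounded martingale on $\Gamma$ and converges a.s.\ there; combined with \eqref{eq:THMA4}, which forces $\int_0^\infty\|r_s\|\,ds<\infty$ by Cauchy--Schwarz against $\int a<\infty$, and with \eqref{eq:THMA3}, which gives $\int_0^\infty\|R_s\|\,ds<\infty$ a.s.\ on $\Gamma$, we get that $S_t$ converges a.s.\ on $\Gamma$ to a finite limit $L$. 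The whole content is then the anti-concentration statement $\P[\Gamma\cap\{L=\eta\}]=0$ for $\mathcal{F}_p$-measurable $\eta$.

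For this I would argue by contradiction: suppose $\P[\Gamma\cap\{L=\eta\}]>0$. The key device is to look at the ``tail'' $S_t-S_p=\int_p^t\Psi_s\,ds+M_t-M_p$, which on the event $\Gamma\cap\{L=\eta\}$ tends to $\eta-S_p$, i.e.\ the tail increment is asymptotically $\mathcal{F}_p$-measurable. Following Brandi\`ere--Duflo, I would compare the martingale part to its angle bracket: by \eqref{eq:THMA1} the predictable quadratic variation of $M_t-M_p$ between times $t$ and $\infty$ is comparable to $\alpha^2(t)=\int_t^\infty a(s)\,ds$, while by \eqref{eq:THMA4} the drift $\int_t^\infty r_s\,ds$ is $o(\alpha(t))$ (again Cauchy--Schwarz) and by \eqref{eq:THMA3} $\int_t^\infty R_s\,ds$ is $o(\alpha(t))$ in $L^1(\Gamma)$; hence the fluctuation of the tail at scale $\alpha(t)$ is governed entirely by $M$. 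One then shows, using the martingale CLT / a conditional Paley--Zygmund or second-moment argument, that conditionally on $\mathcal{F}_t$ the rescaled increment $(S_\infty-S_t)/\alpha(t)$ cannot concentrate near the single point $(\eta-S_t)/\alpha(t)$: more precisely, there is $c>0$ such that on a set of positive probability $\limsup_t \P[\,|S_\infty-S_t|\ge c\,\alpha(t)\mid \mathcal{F}_t\,]>0$, which is incompatible with $S_t\to\eta$ a.s.\ on an event of positive probability (on which $S_\infty-S_t=\eta-S_t\to 0$, so in particular $(S_\infty-S_t)/\alpha(t)$ would have to be $o(1)$ were it not for $M$, and the $M$-fluctuations are of order exactly $\alpha(t)$). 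The jump bound \eqref{eq:THMA2}, $\sum_i|\Delta M^i_t|\le K\alpha(t)$, is what allows the martingale central-limit/Lindeberg step to go through despite $M$ having finite variation (the jumps are negligible at scale $\alpha(t)$ only after the rescaling, so one needs them controlled uniformly by $\alpha$).

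Concretely I expect to introduce a stopping time $\tau_C=\inf\{t:K>C \text{ fails, or } Z\text{ leaves a good set}\}$ to make $K$ bounded, work on $\Gamma\cap\{K\le C\}$, and then for a deterministic sequence $t_n\to\infty$ study $M_{t_{n+1}}-M_{t_n}$ divided by $\alpha(t_n)$, showing via the Burkholder--Davis--Gundy inequality (item 5 of Section 2) and the lower bound in \eqref{eq:THMA1} that these increments have conditional second moment bounded below and, by \eqref{eq:THMA2}, satisfy a Lindeberg condition, so they cannot all be forced close to a prescribed $\mathcal{F}_{t_n}$-measurable target; a Borel--Cantelli / reverse-martingale argument then yields $\P[\Gamma\cap\{L=\eta\}\cap\{K\le C\}]=0$, and letting $C\to\infty$ finishes the proof.

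The main obstacle is the last step: making rigorous the statement ``a martingale whose tail angle-bracket is of order $\alpha(t)^2$ and whose jumps are $o(\alpha(t))$ after rescaling cannot have its tail increment be asymptotically measurable with respect to the present.'' This requires a careful conditional second-moment lower bound together with a Lindeberg-type control of the jumps, exactly as in \cite{Brandiere96}; the hypothesis $\alpha^2(t)=O(\int_t^\infty e^{-2\lambda(s-t)}a(s)\,ds)$ is not needed here (it is used only in the application, Lemma \ref{lem:R}) but the comparability \eqref{eq:THMA1} of $\Lambda$ with $a$ is essential to pin the scale. I would also take care at the points flagged in Remark \ref{rk:inaccuracy} of \cite{Brandiere96}, in particular ensuring the random variable $K$ is handled by localization rather than assumed bounded.
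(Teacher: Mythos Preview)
Your outline is correct on several points: the convergence of $S_t$ on $\Gamma$ follows easily from the hypotheses as you describe, the localization reducing $K$ to a deterministic constant is exactly what the paper does (via stopping times $T_n$ and an auxiliary Poisson process), and you are right that the drift tail $\int_t^\infty \Psi_s\,ds$ is $o(\alpha(t))$ in the appropriate sense. You also correctly note that the exponential comparability hypothesis on $\alpha^2$ is not used in this theorem.

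The genuine gap is in the anti-concentration step. Your proposed contradiction, that $\limsup_t \P\big[\,|S_\infty-S_t|\ge c\alpha(t)\,\big|\,\mathcal{F}_t\big]>0$ is incompatible with $L=\eta$, does not work as stated: on $\{L=\eta\}$ one has $S_\infty-S_t=\eta-S_t$, which is $\mathcal{F}_t$-measurable but is typically of order $\alpha(t)$, not $o(\alpha(t))$, precisely because $\rho_t:=M_\infty-M_t$ has conditional variance of order $\alpha^2(t)$. So knowing that $|S_\infty-S_t|$ is often $\ge c\alpha(t)$ yields no contradiction. Likewise, a martingale CLT or Lindeberg argument would tell you that $\rho_t/\alpha(t)$ is nondegenerate, but not that it cannot equal a specific $\mathcal{F}_t$-measurable value on a non-$\mathcal{F}_t$-measurable event $G$.

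What the paper (following Brandi\`ere--Duflo) actually does is different and sharper. First, from the second and fourth conditional moments of $\rho_t$ (the fourth-moment bound uses \eqref{eq:THMA2} via a Burkholder-type inequality) one gets a \emph{first}-moment lower bound $\E[\|\rho_t\|\,|\,\mathcal{F}_t]\ge c_0\alpha(t)$. Second, and this is the idea you are missing, one introduces a Borel map $U$ from $\mathbb{R}^d\setminus\{0\}$ into the orthogonal group with $U(a)[a/\|a\|]=e_1$, and considers the vector $\|\rho_t\|e_1+U(S_t)\rho_t$. Since $S_t$ is $\mathcal{F}_t$-measurable and $\E[\rho_t|\mathcal{F}_t]=0$, its conditional expectation is $\E[\|\rho_t\|\,|\,\mathcal{F}_t]\,e_1$, of norm at least $c_0\alpha(t)$; but on $G=\Gamma\cap\{L=\eta\}$ (after reducing to $\eta=0$), $T_t:=\rho_t+\tau_t=-S_t$, so $\|T_t\|e_1+U(S_t)T_t=0$ and hence $\big\|\|\rho_t\|e_1+U(S_t)\rho_t\big\|\le 2\|\tau_t\|=o(\alpha(t))$. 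Comparing these on the sets $G_t=\{\P(G|\mathcal{F}_t)>1/2\}$ forces $\P(G_t)\to 0$, hence $\P(G)=0$. The rotation is what converts the useless information ``$\rho_t$ has mean zero'' into a genuine tension with the lower bound on $\E[\|\rho_t\|\,|\,\mathcal{F}_t]$; without it, no second-moment or Paley--Zygmund argument alone closes the loop.
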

\begin{remark} \label{rk:inaccuracy} Our theorem here is a continuous-time version of Theorem A by Brandi\`ere and Duflo in \cite{Brandiere96}. Their results is widely applied to discrete stochastic approximation processes, in particular to showing the non convergence to a repulsive equilibrium. Note that there is an inaccuracy in the application of the Burkholder's inequality in their proof. Beside of this, there is also a mistake in the application of their theorem to the proof of Proposition 4 in \cite{Brandiere96} since the process $S_n$ defined in page 406 is not adapted. 
\end{remark}
\begin{proof} \ \\
\textit{Simplification of the hypotheses:}
It is enough to prove the Theorem assuming in addition that the random variable $K$ is non-random and that \eqref{eq:THMA1}, \eqref{eq:THMA2} and \eqref{eq:THMA4} are satisfied a.s. on $\Omega$.

Let us explain shortly why:  The idea is due to Lai and Wei in \cite{Lai1983} (see also \cite{Duflo1996}, p. 60-61). 
For $n\in\mathbb{N}$, let $T_n$ be the first time $t$ such $\Lambda(t)\not\in [n^{-1} a(t),na(t)]$ or $|\Delta M^i_t|>n \alpha(t)$ for some $i$ or $\int_0^t \frac{\|r_s\|^2}{a(s)} \rmd s >n$. Then $T_n$ is an increasing sequence of stopping times and a.s. on $\Gamma\cap\{K\le n\}$, $T_n=\infty$.

Possibly extending the probability space, let $N$ be a Poisson process with intensity $a(t)$.
For $n\in \mathbb{N}$, $i\in\{1,\dots,d\}$ and $t>0$, set $$\tilde{M}^{i}_t=M^i_{t\wedge T_n} + N_t-N_{t\wedge T_n}\ \text{ and } \ \tilde{r}_t=r_{t\wedge T_n}.$$ Then, $\tilde{M}$ and $\tilde{r}$ satisfy \eqref{eq:THMA1}, \eqref{eq:THMA2} and \eqref{eq:THMA4} a.s. on $\Omega$, with $K=n$, and on the event $\{T_n=\infty\}$, $\tilde{M}=M$ and $\tilde{r}=r$. Now set  $$L_n=\int_0^\infty (\tilde{r}_s+R_s) \rmd s + \tilde{M}_\infty,$$
which is well defined on $\Gamma$. Then a.s. on the event $\Gamma_n:=\Gamma\cap\{K\le n\}$, we have  $L_n=L$.

Suppose now that for all $n$, we have $\P[\Gamma_n\cap \{L_n=\eta\}]=0$, then we also have 
$\P[\Gamma\cap \{L=\eta\}]=\lim_{n\to\infty}\P[\Gamma_n\cap \{L=\eta\}]=\lim_{n\to\infty}\P[\Gamma_n\cap \{L_n=\eta\}]=0$.

\medskip
Let $\tilde\Omega$ be the event that \eqref{eq:THMA1}, \eqref{eq:THMA2} and \eqref{eq:THMA4} is satisfied with non-random positive constant $K$. 
From now on, we suppose that $K$ is non-random and that \eqref{eq:THMA1}, \eqref{eq:THMA2} and \eqref{eq:THMA4} are  satisfied a.s. on $\Omega$.

A first consequence is that, $M$, $[M^i]-\langle M^i\rangle$ and $\|M\|^2-A$, with $A=\sum_i \langle M^i\rangle$, are uniformly integrable martingales. Indeed, using Lemma VII.3.34 in \cite{Jacod2003}, p. 423, there are constant $k_1$ and $k_2$ such that
$$\E\left[{\sup}_{0\le s\le t}|M_s^i|^4\right]\le k_1\left(\sup_{0\le s\le t,\omega\in\tilde \Omega}|\Delta M_t^i(\omega)|\right)^2\left(\E\left[\langle M^i \rangle_t^2\right]\right)^{1/2}+k_2\E\left[\langle M^i \rangle_t^2\right].$$
Recall from (\ref{eq:THMA1}) and (\ref{eq:THMA2}) that $$\langle M^i\rangle_t=\int_0^t\Lambda_s^i\rmd s\le K\int_0^t a(s)\rmd s<K\int_0^{\infty} a(s)\rmd s,\quad |\Delta M^i_t|\le K\alpha(t)\le K\alpha(0)$$ for all $t\ge0$. It implies that $\E(\|M_t\|^4)$ is uniformly bounded and  $M$ is thus uniformly integrable.  

Without loss of generality, we also suppose that $p=0$ and $\eta=0$. Otherwise, one can replace $\mathcal{F}_t$, $M_t$, $r_t$ and $R_t$ by $\mathcal{F}_{t+p}$, 
$M_{t+p}-M_p$, $r_{t+p}+\beta'(t)\left(\eta-\int_0^pr_s\rmd s-M_p\right)$ and $R_{t+p}+\beta'(t)\left(\eta-\int_0^pR_s\rmd s-M_p\right)$ respectively, where $\beta:[0,\infty)\to (0,\infty)$ is some differentiable function such that $\beta(0)=1$, $\lim_{t\to\infty}\beta(t)=0$ and $\beta(t)=o(\alpha(t))$.

Set $G=\Gamma\cap \{L=0\}$. For $t\ge 0$, define $\rho_t=M_\infty-M_t,\ \tau_t=\int_t^\infty \Psi_s \rmd s\ \text{ and } T_t=\rho_t+\tau_t.$ 
Then $T_t=L-S_t$ and on $G$, $T_t=-S_t$.

Since for all $t>0$, $(\|M_{s}-M_t\|^2-(A_s-A_t),\ s\ge t)$ is a uniformly integrable martingale, we have that for all $t>0$, $\E[\|\rho_t\|^2|\mathcal{F}_t]=\E\big[ A_\infty-A_t|\mathcal{F}_t\big]=\E\big[ \int_t^{\infty}\Lambda(s)\rmd s|\mathcal{F}_t\big]$ and therefore
$$K^{-1}\alpha^2(t)\le \E[\|\rho_t\|^2|\mathcal{F}_t]\le K\alpha^2(t).$$

Using Lemma VII.3.34 in \cite{Jacod2003} to the martingale $(M_s-M_t,\ s\ge t)$, we have
\begin{align*}\E\left[|M_s^i-M_t^i|^4|\mathcal{F}_t\right]& \le k_1\left(\sup_{t\le u\le s,\omega\in\tilde \Omega}|\Delta M_u^i(\omega)|\right)^2\left(\E\left[\left(\langle M^i \rangle_s-\langle M^i \rangle_t\right)^2|\mathcal{F}_t\right]\right)^{1/2}\\
&+k_2\E\left[\left(\langle M^i \rangle_s-\langle M^i \rangle_t\right)^2|\mathcal{F}_t\right]\\
&\le k_1K^3\alpha^2(t)\int_t^s a(u)\rmd u+k_2K^2\left(\int_t^s a(u)\rmd u\right)^2.
\end{align*}

Hence, for all $t>0$, there is a constant $k$ such that $\E[\|\rho_t^4\|\mathcal{F}_t]\le k\alpha^4(t).$

Set $c_0=K^{-\frac32} k^{-\frac12}$. Since 
$\E\big[\|\rho_t\|^2|\mathcal{F}_t\big]
\le \E\big[\|\rho_t\||\mathcal{F}_t\big]^{\frac23}\E\big[\|\rho_t\|^4|\mathcal{F}_t]^\frac13,$
we have that for all $t$,
\begin{align*}
\E[\|\rho_t\||\mathcal{F}_t]& \ge c_0 \alpha(t).
\end{align*}

Let $U$ be a Borel function from $\mathbb{R}^d\setminus\{0\}$ onto the set of  $d\times d$ orthogonal matrices such that $U(a)[a/\|a\|]=e_1$ (with $e_1=(1,0,\dots,0)$).
Then on $G$,
\begin{align*}
&\|T_t\| e_1+U(S_t) T_t = 0\\
&\big\|\|\rho_t\| e_1+ U(S_t)\rho_t\big\|\le 2\|\tau_t\|.
\end{align*}

Set $G_t:=\{\P(G|\mathcal{F}_t)>\frac12\}$. Then for all $t>0$ (using in the second inequality that $S_t$ is $\mathcal{F}_t$-measurable and that $\E[\rho_t|\mathcal{F}_t]=0$)
\begin{align*}
\P(G_t)
&\le \frac{1}{c_0\alpha(t)}\big\|\E\big[ 1_{G_t}\E[\|\rho_t\|e_1|\mathcal{F}_t]\big]\big\|\\
&\le \frac{1}{c_0\alpha(t)}\big\|\E\big[ 1_{G_t}\E[\|\rho_t\|e_1 + U(S_t)\rho_t|\mathcal{F}_t]\big]\big\|\\
&\le \frac{1}{c_0\alpha(t)}\big\|\E\big[ 1_{G}\E[\|\rho_t\|e_1 + U(S_t)\rho_t|\mathcal{F}_t]\big]\big\|\\
& + \frac{1}{c_0\alpha(t)}\big\|\E\big[ (1_{G_t}-1_{G})\E[\|\rho_t\|e_1 + U(S_t)\rho_t|\mathcal{F}_t]\big]\big\|\\
&\le \frac{2}{c_0\alpha(t)}\E\big[ 1_{G}\|\tau_t\|\big] + \frac{2}{c_0\alpha(t)}\left(\E\big[ (1_{G_t}-1_{G})^2\right)^{\frac12} \left(\E[\|\rho_t\|^2]\right)^{\frac12}.
\end{align*}
Note that $$\lim_{t\to\infty}\E\big[(1_{G_t}-1_{G})^2\big]=0 \quad \text{  and  } \quad\E[\|\rho_t\|^2]\le c_+\alpha^2(t).$$ 
Thus, the second term converges to $0$. For the first term, (using Cauchy-Schwarz inequality to obtain the first term on the right hand side)
\begin{align*}
\E\big[ 1_{G}\|\tau_t\|\big]
\le & \; \E\left[ 1_{G}\int_t^\infty \|r_s\| \rmd s\right] + \E\left[ 1_{G}\int_t^\infty \|R_s\| \rmd s\right]\\
\le & \; \alpha(t) \E\left[ 1_{G}\left(\int_t^\infty \frac{\|r_s\|^2}{a(s)} \rmd s\right)^{\frac12}\right] + o(\alpha(t)) = o(\alpha(t)) 
\end{align*}
using Cauchy-Schwarz inequality, Lebesgue's Dominated Convergence Theorem and the hypotheses. We thus obtain that $\P(G)=\lim_{t\to\infty}\P(G_t)=0$.
\end{proof}

\section*{Acknowledgement}
O. Raimond's research has been conducted as part of the project Labex MME-DII (ANR11-LBX-0023-01) and of the project ANR MALIN (ANR-16-CE93-0003). T.M. Nguyen's research is partially supported by Crafoord Foundation and Thorild Dahlgren \& Folke Lann\'er Funds. The authors would like to thank the anonymous referees for their careful  reading and their valuable suggestions which improved the manuscript.

\end{document}